\theoremstyle{plain}
\newtheorem{theorem}{Theorem}[section]
\newtheorem{lemma}[theorem]{Lemma}
\newtheorem{corollary}[theorem]{Corollary}
\newtheorem{proposition}[theorem]{Proposition}
\newtheorem{assumption}[theorem]{Assumption}
\theoremstyle{definition}
\newtheorem{definition}[theorem]{Definition}
\theoremstyle{remark}
\newtheorem{remark}[theorem]{Remark}
\renewcommand{\indent}{\hspace*{5mm}}
\newcommand{\cK}{{\cal K}}
\newcommand{\cL}{{\cal L}}
\newcommand{\cps}{{\cal K}} 
\renewcommand{\ss}{{A}}
\newcommand{\epsilonr}{\frac{1}{\epsilon}}
\newcommand{\lambdar}{\frac{1}{\lambda}}
\newcommand{\Pizero}{\Pi_{0}}
\newcommand{\Psiinfty}{\Psi_{\infty}}
\newcommand{\simzero}{\sim_{0}}
\newcommand{\siminfty}{\sim_{\infty}}
\begin{document}
	
\title{Relation between the rate of convergence of strong law of large numbers and the rate of concentration of Bayesian prior in game-theoretic probability}

\author{
	Ryosuke Sato\thanks{Graduate School of Information Science and Technology, University of Tokyo}, \ 
	Kenshi Miyabe\thanks{School of Science and Technology, Meiji University} \ 
	and Akimichi Takemura\thanks{The Center for Data Science Education and Research, Shiga University}
}
\date{April, 2016}
\maketitle

\begin{abstract}
We study the behavior of the capital process of a continuous Bayesian mixture of fixed proportion betting strategies in the one-sided unbounded forecasting game 
in game-theoretic probability.    We establish the relation between the rate of convergence of the strong law of large numbers in the self-normalized form and the rate of divergence to infinity of the  prior density around the origin.  
In particular we present prior densities
ensuring the validity of Erd\H{o}s--Feller--Kolmogorov--Petrowsky law of the iterated logarithm.  
\end{abstract}

\noindent
{\it Keywords and phrases:} \ 
constant-proportion betting strategy,
Erd\H{o}s--Feller--Kolmogorov--Petrowsky law of the iterated logarithm,
one-sided unbounded game,
self-normalized processes,
upper class.

\section{Introduction}

The most basic proof of the strong law of large numbers (SLLN) in Chapter 3 of
\cite{ShaferVovk2001Probability} uses a discrete mixture
\begin{align}\label{eq:discrete-mixture}
\sum_{j=1}^\infty2^{-j-1}\prod_{i=1}^n(1+2^{-j}x_i) \; + \; 
\sum_{j=1}^\infty2^{-j-1}\prod_{i=1}^n(1-2^{-j}x_i)
\end{align}
of fixed proportion betting strategies.
The mixture puts the weight $2^{-j-1}$ on the points $\pm 2^{-j}$, $j=1,2,\dots$.
The sum of weights on the interval $[0,2^{-k}]$, $k\ge 1$,  is expressed  as
\[
\sum_{j=k}^\infty 2^{-j-1} = 2^{-k},
\]
which is equal to the length of the interval $[0,1/2^k]$.
Hence this mixture can be understood as the discrete approximation to the continuous uniform distribution 
over the interval $[-1/2, 1/2]$. 
In Chapter 3 of \cite{ShaferVovk2001Probability} this mixture is used only to prove
the usual form of SLLN, and any discrete distribution having the origin as an
accumulation point of mixture weights serves the same purpose. 
As we show in Section 
\ref{sec:rate}, the concentration of the mixture weights around the origin has
the direct implication on the rate of convergence of SLLN forced by the mixture.

In this paper we consider a continuous mixture in the integral form. 
Although a proof based on a discrete mixture is conceptually simpler, 
the integral in a continuous mixture is more convenient for analytic treatment.  
In fact in this paper 
we give a unified treatment covering the usual SLLN, the validity (i.e., the upper bound)
of the usual law of iterated logarithm (LIL), and finally  the validity
of Erd\H{o}s--Feller--Kolmogorov--Petrowsky (EFKP) LIL
\cite[Chapter 5.2]{Revesz2013Random}.

Another feature of this paper is the self-normalized form
in which we consider SLLN.
Let 
\[
S_n = X_1 + \dots + X_n,  \qquad \ss_n^2=X_1^2 + \dots + X_n^2
\]
denote the sum of $n$ random variables and
and the sum of their squares, respectively.  We compare 
$S_n$ to $\ss_n^2$, rather than to $n$. 
For example $S_n /n \rightarrow 0$ is the usual non-self-normalized SLLN, whereas
$S_n / \ss_n^2 \rightarrow 0$ is the SLLN in the self-normalized form.
We often obtain cleaner statements and proofs in the self-normalized form.
For instance, in measure-theoretic probability,
Griffin and Kuelbs \cite{GriffinKuelbs1991Some} showed the EFKP-LIL in the self-normalized form in i.i.d.\ case with some additional conditions.
Later, Wang \cite{WangSPL1999} and Cs{\"o}rg{\H{o}} et al. \cite{CsorgoAP2003} eliminated 
some of the conditions. A similar argument in the self-normalized form
can also be seen in game-theoretic probability
as shown in  \cite{sasai2015efkp-sn}.
Self-normalized processes including $t$-statistic are known for their statistical applications.
In studying self-normalized processes, self-normalized sums $S_n / \ss_n$ are regarded as
important and one of the reasons is that they have close 
relations to $t$-statistic; see, e.g., 
\cite{Efron1969}, \cite{Griffin-Phillip-ECP2002}.
For a survey of results concerning self-normalized processes, 
especially self-normalized sums, see \cite{Pena-Lai-Shao-2009book}, 
\cite{Shao-Wang-QlyingPS2013}, and references in \cite{sasai2015efkp-sn}.
In the self-normalized form of SLLN we only consider paths of Reality's moves such that
$\ss_\infty=\lim_n \ss_n=\infty$. Hence the events considered in this
paper are conditional events given $\ss_\infty=\infty$.

Yet another feature of this paper is that we mainly consider the one-sided unbounded forecasting game.
We notice that
the results on the bounded forecasting game can be derived from the results on  
the one-sided unbounded forecasting game.  Also it is an interesting fact that
in the one-sided unbounded forecasting game the usual non-self-normalized SLLN does not hold.
We discuss this fact in 
Section \ref{sec:other-properties}.
The importance of this game will be explained in Section \ref{sec:one-sided}
and Section \ref{sec:discussions}.

We remark on the terminology used in this paper.  We use ``increasing'' and ``decreasing'' in the weak sense, i.e.\, we simply write ``increasing'' instead of more accurate ``(monotone) nondecreasing''.  

The organization of this paper is as follows.  
In Section \ref{sec:one-sided} we introduce the protocol of the one-sided unbounded 
forecasting game and relate it to betting on positive price processes.  We also
define Bayesian strategies with prior densities.
In Section \ref{sec:preliminary} we establish some preliminary results on 
the capital process of a Bayesian strategy.
In Section \ref{sec:rate} we prove a general inequality for the capital process
of a Bayesian strategy and apply it to prove SLLN and the validity of LIL in the
self-normalized form based on Bayesian strategies.
In Section \ref{sec:equivalence} we define two functionals which connect prior densities
and functions of the upper class for EFKP-LIL.
Based on these results,
we give a proof of the validity of EFKP-LIL using a corresponding
Bayesian strategy.
In Section \ref{sec:other-properties} we prove other basic properties
of the one-sided unbounded forecasting game, including the rate of SLLN
in the usual non-self-normalized form and 
the Reality's deterministic strategy against the usual form of SLLN.
We end the paper with some discussions in Section \ref{sec:discussions}.

\section{The one-sided unbounded forecasting game and the Bayesian strategy}
\label{sec:one-sided}
In this paper we mainly consider the one-sided unbounded forecasting game
defined as follows.

\begin{quote}
	{\sc The One-sided Unbounded Forecasting Game} \quad (OUFG)\\
	\textbf{Players}: Skeptic, Reality\\
	\textbf{Protocol}:\\
	\indent Skeptic starts with the initial capital $\cps_0>0$.\\
	\indent FOR $n=1,2,\ldots$:\\
	\indent\indent Skeptic announces $M_n\in\mathbb{R}$.\\
	\indent\indent Reality announces $x_n\in[-1,\infty).$\\
	\indent\indent $\cps_n=\cps_{n-1}+M_n x_n$.\\
	\textbf{Collateral Duties}:
	Skeptic must keep $\cps_n$ nonnegative.
	Reality must keep $\cps_n$ from tending to infinity.
\end{quote}
In game-theoretic probability the initial capital $\cps_0$ is 
usually standardized to be 1.  However the value of the initial capital is not relevant
in discussing whether Skeptic can force an event $E$ or not. 
Also notation is sometimes simpler if we allow arbitrary positive value for $\cps_0$.

In OUFG, Reality's move $x_n$ is unbounded on the positive side.
On the other hand, in the bounded forecasting game (BFG) of Chapter 3 of \cite{ShaferVovk2001Probability}, $x_n$ is restricted as $|x_n|\le 1$.
Since the Reality's move space is smaller in BFG than in OUFG, Reality is weaker against Skeptic in BFG than in OUFG.  This implies that if Skeptic can force an event $E$ in OUFG, 
then he can force $E$ in BFG.
In this sense the  one-sided version of SLLN (cf.\ (3.9) in Lemma 3.3 of 
\cite{ShaferVovk2001Probability}) in OUFG is stronger than that in BFG.  
This is one of the reasons why we mainly consider OUFG in this paper.

Let $\epsilon_n = M_n /\cps_{n-1}$ denote the proportion of the capital Skeptic bets 
on the round $n$. The collateral duty of Skeptic is $\epsilon_n \in [0,1]$ in OUFG, whereas
it is $\epsilon_n \in [-1,1]$ in BFG.  
This again reflects the fact that Skeptic is stronger in
BFG than in OUFG.

OUFG is a natural protocol, when we consider betting on a positive price process $\{p_n\}$
of some risky financial assets.  Let
\[
x_n = \frac{p_n}{p_{n-1}} -1
\]
denote the return of the price process.  Then $x_n \ge -1$ and there is no upper bound for
the return $x_n$ if $p_n$ is allowed to take an arbitrary large value.  
Furthermore 
we can write $\cps_n=\cps_{n-1}+M_n x_n=\cps_{n-1}(1+\epsilon_n x_n)$ as
\begin{equation}
\label{eq:portfolio}
\cps_n=\cps_{n-1}\left(1+ \epsilon_n\left(\frac{p_n}{p_{n-1}} -1\right)\right)=
\cps_{n-1}\left((1-\epsilon_n) +  \epsilon_n \frac{p_n}{p_{n-1}} \right).
\end{equation}
Hence $(1-\epsilon_n)$ is the proportion of the current capital Skeptic keeps as cash, which 
does not change the value from round to round, and $\epsilon_n$ is 
the proportion of the current capital Skeptic bets on the risky asset.

When $\epsilon_n \equiv \epsilon$ is a constant, we call the strategy 
the constant-proportion betting strategy or the $\epsilon$-strategy.
The capital process of the $\epsilon$-strategy
is written as
\[
\cps_n = \cps_0 \prod_{i=1}^n (1+\epsilon x_i).
\]
We now consider continuous mixture of $\epsilon$-strategies where the initial capital
is distributed according to the (unnormalized)  prior density $\pi\ge 0$ of an absolutely continuous finite measure on the  unit interval $[0,1]$: 
\[
\int_0^1 \pi(\epsilon) d\epsilon < \infty.
\]
The betting proportion $\epsilon_n$ of the  Bayesian strategy with the prior density $\pi$ is defined as
\begin{equation}
\label{eq:eps-n}
\epsilon_n = \frac{\int_0^1 \epsilon  \prod_{i=1}^{n-1} (1+\epsilon x_i) \pi(\epsilon)d\epsilon}{\int_0^1 \prod_{i=1}^{n-1} (1+\epsilon x_i) \pi(\epsilon)d\epsilon}.
\end{equation}
Then 
the capital process $\cps_n^{\pi}$ of this strategy is written as
\begin{equation}
\label{eq:continuous-mixture}
\cps_n^{\pi} = \int_0^1 \prod_{i=1}^n 
(1+\epsilon x_i) \pi(\epsilon)d\epsilon
\end{equation}
with the initial capital $\cps_0^{\pi} = \int_0^1  \pi(\epsilon)d\epsilon$.
The equation \eqref{eq:continuous-mixture} is checked by induction on $n$, since
with $\epsilon_n$ given in \eqref{eq:eps-n}, we have
\begin{align*}
\cps_{n-1}(1+\epsilon_n x_n)&=
\int_0^1 \prod_{i=1}^{n-1} (1+\epsilon x_i) \pi(\epsilon)d\epsilon \;
\left(1 +  
\frac{\int_0^1 \epsilon  \prod_{i=1}^{n-1} (1+\epsilon x_i) \pi(\epsilon)d\epsilon}{\int_0^1 \prod_{i=1}^{n-1} (1+\epsilon x_i) \pi(\epsilon)d\epsilon} x_n \right) \\
&=\int_0^1 (1+\epsilon x_n)\prod_{i=1}^{n-1} (1+\epsilon x_i) \pi(\epsilon)d\epsilon 
=\int_0^1 \prod_{i=1}^{n} (1+\epsilon x_i) \pi(\epsilon)d\epsilon.
\end{align*}
A similar argument is also in \cite{sasai2015efkp-sn}.
One should notice that the equation \eqref{eq:discrete-mixture}
can be seen as one discretization of \eqref{eq:continuous-mixture}, where
$\pi$ is the uniform density on $[-1/2,1/2]$.

We are concerned with the rate of increase of $\pi(\epsilon)$ as $\epsilon \downarrow 0$.
Hence we require some regularity conditions on the behavior of $\pi(\epsilon)$ around the origin.
The following condition on $\pi$ is convenient in discussing EFKP-LIL in Section
\ref{sec:equivalence}.



\begin{assumption}
\label{ass:1}
There exist $\epsilon_\pi \in (0,1)$ and $\delta_\pi>0$ such that 
\begin{enumerate}
\item $\pi(\epsilon)$ is a prior density, that is, nonnegative and integrable on $[0,1]$,
\item $\pi(\epsilon)\ge \delta_\pi$ on $(0,\epsilon_\pi)$, and 
\item $\epsilon \pi(\epsilon)$ is 
increasing on
$(0,\epsilon_\pi)$.
\end{enumerate}
\end{assumption}

For simplicity, we allow the case that the integral of $\pi$ on $[0,1]$ is not $1$,
which does not cause a problem when considering limit theorems.
Note that we are allowing  $\infty = \lim_{\epsilon \downarrow 0} \pi(\epsilon)$, 
but by the monotonicity and the integrability we are assuming 
$0=\lim_{\epsilon\downarrow 0}\epsilon \pi(\epsilon)$.
Assumption \ref{ass:1} holds for particular examples discussed in the next section.
Concerning the condition on $\delta_\pi$ in Assumption \ref{ass:1}, 
Skeptic can always allocate the initial capital of $\delta_\pi$ to the uniform prior 
of Section \ref{subsec:uniform} to satisfy this condition.

\section{Preliminary results}
\label{sec:preliminary}

In this section we see the self-normalized form of SLLN
in the one-sided unbounded forecasting game.

\begin{proposition}
\label{prop:SLLN-1}
In OUFG, by any Bayesian strategy with $\pi$ satisfying Assumption \ref{ass:1}, Skeptic weakly forces
\begin{equation}
\label{eq:SLLN-1}
\ss_\infty=\infty \ \ \Rightarrow \ \ \limsup_{n} \frac{S_n}{\ss_n^2} \le 0.
\end{equation}
\end{proposition}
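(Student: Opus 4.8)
The plan is to use the Bayesian capital process $\cps_n^{\pi}$ of \eqref{eq:continuous-mixture} itself as the forcing strategy: it is nonnegative (each factor $1+\epsilon x_i\ge 1-\epsilon\ge 0$ for $\epsilon\in[0,1]$, $x_i\ge-1$, and $\pi\ge 0$), so by the definition of weak forcing it suffices to show that $\cps_n^{\pi}$ is unbounded along every path of Reality in the complement event
\[
B=\{\ss_\infty=\infty\}\cap\Bigl\{\limsup_{n}\frac{S_n}{\ss_n^2}>0\Bigr\}.
\]
So we fix such a path throughout.

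The first step is to record an exponential lower bound for $\cps_n^{\pi}$ at small betting proportions. Set $\eta=\min\{\epsilon_\pi,1/2\}>0$. For $\epsilon\in(0,\eta)$ we have $\pi(\epsilon)\ge\delta_\pi$ by Assumption \ref{ass:1}(2); moreover $1+\epsilon x_i\ge 1-\epsilon>1/2>0$ and $\epsilon x_i\ge-1/2$, so the elementary inequality $\log(1+u)\ge u-u^2$, which holds for all $u\ge-1/2$ (check by differentiating $u\mapsto\log(1+u)-u+u^2$), gives $\prod_{i=1}^n(1+\epsilon x_i)\ge\exp(\epsilon S_n-\epsilon^2\ss_n^2)$. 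Restricting the integral in \eqref{eq:continuous-mixture} to $(0,\eta)$ therefore yields
\[
\cps_n^{\pi}\ \ge\ \delta_\pi\int_0^{\eta}\exp\bigl(\epsilon S_n-\epsilon^2\ss_n^2\bigr)\,d\epsilon .
\]

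The second step is to choose the betting level so that the right-hand side blows up along a subsequence. Since the path lies in $B$ there are $c>0$ and $n_1<n_2<\cdots\to\infty$ with $S_{n_k}\ge c\,\ss_{n_k}^2$, and $\ss_{n_k}^2\to\infty$ because $\ss_n^2$ is increasing with limit $\infty$. Put $\epsilon_0=\tfrac12\min\{c,\eta\}$ and integrate only over $\epsilon\in(\epsilon_0/2,\epsilon_0)\subset(0,\eta)$: there, using $\epsilon\le\epsilon_0\le c/2$, one has $\epsilon S_{n_k}-\epsilon^2\ss_{n_k}^2=\epsilon\bigl(S_{n_k}-\epsilon\ss_{n_k}^2\bigr)\ge(c\epsilon_0/4)\ss_{n_k}^2$, whence $\cps_{n_k}^{\pi}\ge\delta_\pi(\epsilon_0/2)\exp\bigl((c\epsilon_0/4)\ss_{n_k}^2\bigr)\to\infty$. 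Thus $\sup_n\cps_n^{\pi}=\infty$ on $B$, which is exactly what is needed.

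The computations are routine; the one point requiring attention is the bookkeeping in the second step, namely keeping the betting proportion $\epsilon$ inside the window $(0,\eta)$ on which both $\pi\ge\delta_\pi$ and the logarithmic inequality are available, even when $\limsup_n S_n/\ss_n^2$ is large or infinite — this is why we integrate over a sub-interval bounded away from the unconstrained maximizer $\epsilon\approx S_n/(2\ss_n^2)$, and then track the resulting constants. I also note that only parts (1) and (2) of Assumption \ref{ass:1} are used here (part (3) enters only in Section \ref{sec:equivalence}), and that the argument delivers $\limsup_n\cps_n^{\pi}=\infty$ rather than $\lim_n\cps_n^{\pi}=\infty$, which is consistent with the conclusion being stated as weak forcing.
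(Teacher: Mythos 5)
Your proof is correct and follows essentially the same route as the paper's: restrict the mixture integral to a small interval of betting proportions, apply the inequality $\ln(1+u)\ge u-u^2$ for $u\ge-1/2$ (the paper's Lemma \ref{lem:log-inequality} with $C=1$), and conclude that the capital blows up along the subsequence where $S_n/\ss_n^2$ exceeds a fixed positive constant. The only (harmless) difference is that you lower-bound $\pi$ on the integration window via $\pi\ge\delta_\pi$ from Assumption \ref{ass:1}(2), whereas the paper uses the monotonicity of $\epsilon\pi(\epsilon)$ from part (3); both yield the same divergence.
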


We can prove this proposition by more or less the same way
as in Lemma 3.3 of \cite{ShaferVovk2001Probability}.
The difference is that the mixture we use here is not discrete but continuous.
We will see later that we can show stronger forms in similar arguments.

We begin with the following simple lemma.  The particular case $C=1$ is used in
Lemma 3.3 of \cite{ShaferVovk2001Probability}.
We prove this lemma separately in a stronger form for later use.

\begin{lemma}
\label{lem:log-inequality}
For any $C>0$
\begin{equation}
\label{eq:log-inequality}
\ln(1+t) \ge t- \frac{1+C}{2} t^2  \qquad \text{\rm for} \quad t\ge -\frac{C}{1+C}.
\end{equation}
\end{lemma}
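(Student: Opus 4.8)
The plan is to treat this as an elementary one-variable calculus problem. First I would set
\[
f(t) = \ln(1+t) - t + \frac{1+C}{2}\,t^2
\]
on the interval $\bigl[-\tfrac{C}{1+C},\infty\bigr)$ and record the normalization $f(0)=0$, so that it suffices to prove that $t=0$ is a global minimizer of $f$ on this interval.

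Next I would differentiate. A short computation gives
\[
f'(t) = \frac{1}{1+t} - 1 + (1+C)t
= \frac{t\bigl((1+C)t + C\bigr)}{1+t}.
\]
Throughout the domain we have $1+t>0$ (indeed $1+t \ge \tfrac{1}{1+C}$ at the left endpoint, and the only singularity of $\ln(1+t)$ sits at $t=-1$, which lies outside the domain), so the sign of $f'(t)$ is controlled entirely by the quadratic numerator $t\bigl((1+C)t+C\bigr)$, whose two roots are exactly $t=0$ and $t=-\tfrac{C}{1+C}$.

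Then I would simply read off the sign of $f'$. On $\bigl(-\tfrac{C}{1+C},0\bigr)$ one has $t<0$ while $(1+C)t+C>0$, so $f'<0$; on $(0,\infty)$ both factors are positive, so $f'>0$. Hence $f$ decreases on $\bigl[-\tfrac{C}{1+C},0\bigr]$ and increases on $[0,\infty)$, so it attains its minimum over $\bigl[-\tfrac{C}{1+C},\infty\bigr)$ at $t=0$, where $f(0)=0$. Therefore $f(t)\ge 0$ on the whole interval, which is the assertion \eqref{eq:log-inequality}.

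There is essentially no serious obstacle here; the only point to be slightly careful about is that the stated lower endpoint $-\tfrac{C}{1+C}$ is precisely the negative root of $(1+C)t+C$, so it is exactly the largest interval on which the numerator of $f'$ does not pick up a second sign change, and this is what makes the monotonicity argument clean. Specializing to $C=1$ recovers the familiar bound $\ln(1+t)\ge t-t^2$ for $t\ge -\tfrac12$ used in \cite{ShaferVovk2001Probability}.
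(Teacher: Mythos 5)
Your proof is correct and follows essentially the same route as the paper: define $f(t)=\ln(1+t)-t+\frac{1+C}{2}t^2$, compute $f'(t)=\frac{t((1+C)t+C)}{1+t}$, and read off the sign of $f'$ on either side of $t=0$ to conclude $f\ge f(0)=0$ on the stated interval. Your version is slightly more explicit about the left endpoint and the positivity of $1+t$, but the argument is identical in substance.
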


\begin{proof} Let
\[
g(t)=\ln(1+t) - t + \frac{1+C}{2} t^2, \quad t> -1.
\]
Then $g(0)=0$ and
\[
g'(t)= \frac{1}{1+t} - 1 + (1+C)t = \frac{t(c + (1+c)t)}{1+t}.
\]
Hence $g'(t)=0$ for $t=0$ or $t=-C/(1+C)$, and $g'(t)<  0$ only for
$-C/(1+C)<  t<0$.  This implies the lemma.
\end{proof}



\begin{proof}[Proof of Proposition \ref{prop:SLLN-1}]
We use Lemma \ref{lem:log-inequality} with $C=1$.
Suppose that Reality has chosen a path such that $\ss_\infty=\infty$ and
$\limsup_n S_n/\ss_n^2 > 0$. Then there exists a sufficiently small $0< \delta < \min(\epsilon_\pi,1/2)$ 
such that 
$S_n > \delta \ss_n^2$ for infinitely many $n$.
For such an $n$,
\begin{align*}
\cps_n^{\pi} 
&\ge \int_{\delta/3}^{2\delta/3}
\exp\left(\sum_{i=1}^n \ln(1+\epsilon x_i)\right) \pi(\epsilon)d\epsilon \\
& \ge \int_{\delta/3}^{2\delta/3}
\exp(\epsilon S_n - \epsilon^2 \ss_n^2) \frac{1}{\epsilon}\epsilon\pi(\epsilon) d\epsilon\\
&\ge \frac{3}{2\delta} \frac{\delta}{3} \pi\Big(\frac{\delta}{3}\Big)
\int_{\delta/3}^{2\delta/3} \exp(\ss_n^2 \epsilon (\delta - \epsilon)) d\epsilon \\
&\ge \frac{1}{2} \pi\Big(\frac{\delta}{3}\Big) \frac{\delta}{3}\exp\left(\ss_n^2 \frac{\delta^2}{9}\right).
\end{align*}
Notice that the strategy and the capital process $\cps_n^\pi$ do not depend on $\delta$.
Because  $\lim_n \ss_n = \infty$, we have
$\limsup_n \cps_n^{\pi}  = \infty$.
\end{proof}

\begin{remark}
Skeptic cannot force the event
$A_\infty=\infty\Rightarrow\liminf_{n}\dfrac{S_n}{\ss_n^2}\ge0$.
Reality can announce $x_n=-1$ for all $n$, and
$A_\infty=\infty$ and $\dfrac{S_n}{\ss_n^2}=-1$ for all $n$.
\end{remark}

\section{Rate of convergence of SLLN implied by a Bayesian strategy}
\label{sec:rate}

In this section we present results on the rate of convergence of SLLN by a Bayesian strategy, 
by establishing  some lower bounds for the capital process. 
Note that since the Bayesian strategy 
already weakly forces  \eqref{eq:SLLN-1}, we only need to consider lower bounds when 
$S_n /\ss_n^2$ is sufficiently small.

In Section \ref{subsec:basic-inequality} we present our first inequality, which will be 
used to show the rate of convergence of SLLN for the uniform prior in Section \ref{subsec:uniform}.
Then we give a more refined inequality in Section \ref{subsec:another-bound}, which
will be used to show the rate of convergence of SLLN 
for the power prior (Section \ref{subsec:power-prior}),
for the prior ensuring the validity 
in the usual form of LIL (Section \ref{subsec:prior-for-usual-lil})
and for the prior ensuring 
the validity of a typical form  of EFKP-LIL (Section \ref{subsec:prior-for-typical-EFKP-LIL}).

\subsection{A lower bound for the capital process of a Bayesian strategy}
\label{subsec:basic-inequality}

Our first theorem of this paper bounds $\cps_n^\pi$ from below as follows.
\begin{theorem}
\label{thm:basic-inequality}
In OUFG, for any $C \in (0,\min(\epsilon_\pi,1/2))$ and 
$0 < S_n / \ss_n^2 < C/2$,
\begin{equation}
\label{eq:basic-inequality}
\cps_n^{\pi} \ge \frac{\sqrt{C}}{6} \frac{S_n}{\ss_n^2} \pi\left( \frac{S_n}{\ss_n^2}\right)
\exp\left( \frac{(1-2C) S_n^2}{2 \ss_n^2}\right).
\end{equation}
\end{theorem}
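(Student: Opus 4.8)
The plan is to reproduce the estimate in the proof of Proposition \ref{prop:SLLN-1} for the continuous mixture \eqref{eq:continuous-mixture}, but to restrict the integral to an interval whose \emph{left} endpoint is the observed proportion $\gamma:=S_n/\ss_n^2$, so that the monotonicity of $\epsilon\mapsto\epsilon\pi(\epsilon)$ from Assumption \ref{ass:1}(3) turns the integrated prior into the factor $\gamma\,\pi(\gamma)=\tfrac{S_n}{\ss_n^2}\pi\!\big(\tfrac{S_n}{\ss_n^2}\big)$ appearing in \eqref{eq:basic-inequality}. I would fix a number $\lambda=\lambda(C)>1$, to be pinned down at the very end, small enough that $\lambda<\tfrac{2}{1+C}$. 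Then $\gamma<C/2$ forces $\lambda\gamma<\tfrac{2}{1+C}\cdot\tfrac{C}{2}=\tfrac{C}{1+C}$, and since $\tfrac{C}{1+C}<C<\epsilon_\pi$ the whole interval $[\gamma,\lambda\gamma]$ lies inside $\big(0,\tfrac{C}{1+C}\big)\subseteq(0,\epsilon_\pi)$; this is the only place the hypotheses $S_n/\ss_n^2<C/2$ and $C<\min(\epsilon_\pi,1/2)$ are used. Dropping the rest of the integral (the integrand of \eqref{eq:continuous-mixture} is nonnegative because $1+\epsilon x_i\ge0$ for $\epsilon\in[0,1]$),
\[
\cps_n^{\pi}\ \ge\ \int_{\gamma}^{\lambda\gamma}\exp\!\Big(\sum_{i=1}^n\ln(1+\epsilon x_i)\Big)\pi(\epsilon)\,d\epsilon,
\]
and for every $\epsilon$ in this range and every $i$ we have $\epsilon x_i\ge-\epsilon\ge-\tfrac{C}{1+C}$, so Lemma \ref{lem:log-inequality} applies term by term and gives $\sum_{i=1}^n\ln(1+\epsilon x_i)\ge\epsilon S_n-\tfrac{1+C}{2}\epsilon^2\ss_n^2=\ss_n^2\big(\epsilon\gamma-\tfrac{1+C}{2}\epsilon^2\big)$.

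Next I would change variables $\epsilon=\gamma u$, $u\in[1,\lambda]$, and invoke Assumption \ref{ass:1}(3) in the form $\pi(\gamma u)\ge\pi(\gamma)/u\ge\pi(\gamma)/\lambda$, giving
\[
\cps_n^{\pi}\ \ge\ \frac{\gamma\,\pi(\gamma)}{\lambda}\int_{1}^{\lambda}\exp\!\Big(\ss_n^2\gamma^2\big(u-\tfrac{1+C}{2}u^2\big)\Big)\,du .
\]
The map $u\mapsto u-\tfrac{1+C}{2}u^2$ is a concave parabola with vertex at $u=\tfrac1{1+C}<1$, hence decreasing on $[1,\lambda]$, so bounding the integrand below by its value at $u=\lambda$ and using $\ss_n^2\gamma^2=S_n^2/\ss_n^2$ I obtain
\[
\cps_n^{\pi}\ \ge\ \frac{\lambda-1}{\lambda}\,\frac{S_n}{\ss_n^2}\,\pi\!\Big(\frac{S_n}{\ss_n^2}\Big)\,\exp\!\Big(\frac{S_n^2}{\ss_n^2}\,\lambda\big(1-\tfrac{1+C}{2}\lambda\big)\Big).
\]

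It remains to choose $\lambda$ so that $\lambda\big(1-\tfrac{1+C}{2}\lambda\big)\ge\tfrac{1-2C}{2}$ and $\tfrac{\lambda-1}{\lambda}\ge\tfrac{\sqrt C}{6}$. The first requirement places $\lambda$ between the roots of $(1+C)\lambda^2-2\lambda+(1-2C)=0$; the larger root is $\lambda^{*}=\tfrac{1+\sqrt{C(1+2C)}}{1+C}$, and for $C\in(0,1/2)$ one checks $1<\lambda^{*}<\tfrac{2}{1+C}$, so taking $\lambda=\lambda^{*}$ makes the exponent equal to $\tfrac{(1-2C)S_n^2}{2\ss_n^2}$ and keeps $[\gamma,\lambda\gamma]$ admissible. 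For this choice $\tfrac{\lambda^{*}-1}{\lambda^{*}}=\tfrac{\sqrt C\,(\sqrt{1+2C}-\sqrt C)}{1+\sqrt{C(1+2C)}}$, and $\tfrac{\lambda^{*}-1}{\lambda^{*}}\ge\tfrac{\sqrt C}{6}$ reduces to $6(\sqrt{1+2C}-\sqrt C)\ge1+\sqrt{C(1+2C)}$, whose left side decreases and whose right side increases in $C$ on $(0,1/2)$, so it suffices to check it at $C=\tfrac12$, where it reads $3\sqrt2\ge2$. Combining the displays then yields \eqref{eq:basic-inequality}. The delicate point, and the one I would spend the most care on, is exactly this last step: exhibiting a single $\lambda=\lambda(C)$ that simultaneously delivers the sharp exponent constant $\tfrac{1-2C}{2}$, a prefactor no smaller than $\tfrac{\sqrt C}{6}$, and the inclusion $[\gamma,\lambda\gamma]\subseteq\big(0,\min(\epsilon_\pi,\tfrac{C}{1+C})\big)$ that legitimizes both Lemma \ref{lem:log-inequality} and the monotonicity of $\epsilon\pi(\epsilon)$; once that $\lambda$ is fixed, the rest is the computation of Proposition \ref{prop:SLLN-1} transcribed to a continuous mixture.
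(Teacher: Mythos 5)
Your proof is correct and follows essentially the same route as the paper's: restrict the mixture integral to an interval $\bigl[S_n/\ss_n^2,\ \lambda\, S_n/\ss_n^2\bigr]$ contained in $(0,C/(1+C))$, apply Lemma \ref{lem:log-inequality} with parameter $C$, extract $\frac{S_n}{\ss_n^2}\pi\bigl(\frac{S_n}{\ss_n^2}\bigr)$ via the monotonicity of $\epsilon\pi(\epsilon)$, and bound the exponential from below by its value at the right endpoint. The only difference is bookkeeping: the paper completes the square and takes $\lambda=\frac{1+\sqrt{C}}{1+C}$, reaching the exponent $\frac{(1-C)S_n^2}{2(1+C)\ss_n^2}$ and then using $\frac{1-C}{1+C}>1-2C$, whereas you tune $\lambda=\frac{1+\sqrt{C(1+2C)}}{1+C}$ so that the exponent is exactly $\frac{(1-2C)S_n^2}{2\ss_n^2}$ and verify the prefactor $\sqrt{C}/6$ afterwards; both your monotonicity claims and the check at $C=1/2$ are correct.
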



\begin{proof}
Let $\delta=C/(1+C)$. Then $\delta < C < \epsilon_\pi$. 
Also
\[
\frac{1+\sqrt{C}}{1+C} \frac{S_n}{\ss_n^2} <  \frac{1+\sqrt{C}}{2} \, \frac{C}{1+C} <  \frac{C}{1+C} = \delta < \epsilon_\pi.
\]
Then, by Lemma \ref{lem:log-inequality}
\allowdisplaybreaks
\begin{align}
\cps_n^{\pi} & 
\ge \int_0^\delta \exp\left(\sum_{i=1}^n \ln (1+\epsilon x_i)\right) 
\pi(\epsilon) d\epsilon \nonumber\\
&\ge \int_0^\delta \exp \left( \epsilon S_n - \frac{1+C}{2}\epsilon^2 \ss_n^2\right) \pi(\epsilon)d\epsilon \nonumber\\
&
= \exp\left( \frac{S_n^2}{2(1+C)\ss_n^2}\right)
\int_0^\delta \exp\left( - \frac{(1+C)\ss_n^2}{2} \left(\epsilon- \frac{S_n}{(1+C)\ss_n^2}\right)^2 \right)
\pi(\epsilon) d\epsilon \nonumber\\
& \ge 
\exp\left( \frac{S_n^2}{2(1+C)\ss_n^2}\right)
\int_{\frac{S_n}{\ss_n^2}}^{\frac{1+\sqrt{C}}{1+C}\frac{S_n}{\ss_n^2}} \exp\left( - \frac{(1+C)\ss_n^2}{2} 
\left(\epsilon- \frac{S_n}{(1+C)\ss_n^2}\right)^2 \right)\frac{1}{\epsilon} \epsilon\pi(\epsilon) d\epsilon 
\label{eq:sqrt-c-upper-bound}
\\
& \ge
\exp\left( \frac{S_n^2}{2(1+C)\ss_n^2}\right) \,
\frac{(1+C)\ss_n^2}{(1+\sqrt{C})S_n} \, \frac{S_n}{\ss_n^2} \pi\Big(\frac{S_n}{\ss_n^2}\Big)
\int_{\frac{S_n}{\ss_n^2}}^{\frac{1+\sqrt{C}}{1+C}\frac{S_n}{\ss_n^2}} 
\exp\left( - \frac{(1+C)\ss_n^2}{2} 
\left(\epsilon- \frac{S_n}{(1+C)\ss_n^2}\right)^2 \right) d\epsilon \nonumber\\
&\ge \exp\left( \frac{S_n^2}{2(1+C)\ss_n^2}\right)  \frac{1+C}{1+\sqrt{C}}  
\pi\Big(\frac{S_n}{\ss_n^2}\Big) \, \frac{\sqrt{C}(1-\sqrt{C}) S_n}{(1+C)\ss_n^2} 
\exp\left( - \frac{(1+C)S_n^2}{2\ss_n^2} 
\left(\frac{1+\sqrt{C}}{1+C}-\frac{1}{1+C}\right)^2 \right) \nonumber\\
&= \exp\left( \frac{S_n^2}{2(1+C)\ss_n^2}\right) \, \frac{1-\sqrt{C}}{1+\sqrt{C}} \,  \sqrt{C} \,
\frac{S_n}{\ss_n^2} \pi\Big(\frac{S_n}{\ss_n^2}\Big) \, 
\exp\left( - \frac{CS_n^2}{2(1+C)\ss_n^2} \right) \nonumber\\
&= 
\frac{1-\sqrt{C}}{1+\sqrt{C}} \,  \sqrt{C} \,
\frac{S_n}{\ss_n^2} \pi\Big(\frac{S_n}{\ss_n^2}\Big)  \exp\left( \frac{(1-C)S_n^2}{2(1+C)\ss_n^2}\right).
\nonumber\end{align}
Now 
\[
\frac{1-C}{1+C} > (1-C)^2 > 1-2C
\]
and for $C<1/2$
\[
\frac{1-\sqrt{C}}{1+\sqrt{C}} > \frac{1-\sqrt{1/2}}{1+\sqrt{1/2}} > 0.171 > \frac{1}{6}.
\]
Hence we obtain \eqref{eq:basic-inequality}.
\end{proof}


\subsection{Uniform prior}
\label{subsec:uniform}
Let $\pi(\epsilon)=1$, $\epsilon \in (0,1)$, be the uniform prior.  By Theorem \ref{thm:basic-inequality}
we obtain the following result.

\begin{proposition}
\label{prop:uniform}
In OUFG, by the uniform prior, Skeptic weakly forces
\begin{equation}
\label{eq:uniform-prior}
\ss_\infty = \infty \ \ \Rightarrow \ \ \limsup_n \frac{S_n}{\sqrt{\ss_n^2 \ln \ss_n^2}} \le 1.
\end{equation}
\end{proposition}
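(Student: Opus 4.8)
The plan is to apply Theorem~\ref{thm:basic-inequality} with $\pi(\epsilon)=1$ and then choose the free parameter $C$ as a suitable function of $n$ to extract the $\sqrt{\ss_n^2 \ln \ss_n^2}$ normalization. With the uniform prior, the inequality \eqref{eq:basic-inequality} reads
\[
\cps_n^\pi \ge \frac{\sqrt{C}}{6} \frac{S_n}{\ss_n^2} \exp\left( \frac{(1-2C) S_n^2}{2\ss_n^2}\right)
\]
whenever $0 < S_n/\ss_n^2 < C/2$. Since the left-hand side does not depend on $C$ (the Bayesian strategy is fixed), we are free to pick $C = C_n$ adapted to the current position, as long as the constraint $S_n/\ss_n^2 < C_n/2$ is met and $C_n \in (0, \min(\epsilon_\pi, 1/2))$; here $\epsilon_\pi = 1$ for the uniform prior. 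Since the strategy already weakly forces \eqref{eq:SLLN-1}, I only need to handle paths with $\ss_\infty = \infty$ on which $S_n/\ss_n^2$ is eventually small, so the constraint will be easy to satisfy for large $n$.

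Next I would argue by contradiction: suppose Reality follows a path with $\ss_\infty = \infty$ but $\limsup_n S_n / \sqrt{\ss_n^2 \ln \ss_n^2} > 1$, so there is $\eta > 0$ with $S_n \ge (1+\eta)\sqrt{\ss_n^2 \ln \ss_n^2}$ for infinitely many $n$. For such $n$, substitute into the bound above. The dominant exponential term is $\exp\bigl((1-2C_n) S_n^2 / (2\ss_n^2)\bigr) \ge \exp\bigl((1-2C_n)(1+\eta)^2 (\ln \ss_n^2)/2\bigr) = (\ss_n^2)^{(1-2C_n)(1+\eta)^2/2}$. To make this blow up I want the exponent to exceed $1$ in the limit, so I would take $C_n \to 0$ slowly, e.g. $C_n = (\ln \ss_n^2)^{-1/2}$ (or any sequence tending to $0$ such that $C_n \ss_n^2 \to \infty$ to keep the constraint $S_n/\ss_n^2 < C_n/2$ satisfied along the relevant subsequence, using that $S_n/\ss_n^2 = O(\sqrt{(\ln \ss_n^2)/\ss_n^2}) \to 0$). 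Then $(1-2C_n)(1+\eta)^2/2 \to (1+\eta)^2/2 > 1/2$; actually I need the exponent to beat the polynomial loss coming from the prefactor $\tfrac{\sqrt{C_n}}{6}\cdot\tfrac{S_n}{\ss_n^2}$, which is only of order $(\ss_n^2)^{-1/2}$ up to logarithmic factors, so it suffices that $(1-2C_n)(1+\eta)^2/2 > 1/2$ eventually, i.e. that $(1+\eta)^2(1-2C_n) > 1$, which holds for all large $n$ since $C_n \to 0$ and $(1+\eta)^2 > 1$. Hence $\cps_n^\pi \to \infty$ along this subsequence, so $\limsup_n \cps_n^\pi = \infty$, contradicting Reality's collateral duty. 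This shows the strategy weakly forces \eqref{eq:uniform-prior}.

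The main obstacle is the bookkeeping around the choice of $C_n$: it must simultaneously lie in the admissible range $(0,\min(\epsilon_\pi,1/2))$, satisfy the hypothesis $S_n/\ss_n^2 < C_n/2$ of Theorem~\ref{thm:basic-inequality} along the subsequence where $S_n$ is large, and tend to $0$ fast enough that $(1-2C_n)(1+\eta)^2$ stays bounded away from $1$ from above, yet not so fast that the $\sqrt{C_n}$ factor in the prefactor causes trouble. A clean way to package this is to fix $\eta$ first, then observe that since $S_n/\ss_n^2 \to 0$ on the path (a consequence of SLLN \eqref{eq:SLLN-1}, or directly from $S_n \le C_n\ss_n^2/2$ being the only regime we care about), one can simply take $C_n$ a fixed small constant $C = C(\eta)$ with $(1-2C)(1+\eta)^2 > 1$ and $C < 1/2$; then for all large $n$ in the subsequence the hypothesis $S_n/\ss_n^2 < C/2$ holds automatically, and the prefactor is a fixed constant times $S_n/\ss_n^2$, which is bounded below by a polynomially small quantity that the exponential growth $(\ss_n^2)^{(1-2C)(1+\eta)^2/2}$ with exponent $>1/2$ easily overwhelms. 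This constant-$C$ version avoids all delicate sequence estimates and still yields $\limsup_n \cps_n^\pi = \infty$.
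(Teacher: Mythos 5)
Your proposal is correct and, in its final ``fixed $C=C(\eta)$'' packaging, is essentially the paper's own proof: the paper likewise picks a small constant $C$ with $(1-2C)^{-1/2}$ below the limsup, invokes Proposition \ref{prop:SLLN-1} to guarantee the hypothesis $0<S_n/\ss_n^2<C/2$ along the offending subsequence, and observes that the exponential factor $(\ss_n^2)^{(1-2C)S_n^2/(2\ss_n^2\ln\ss_n^2)}\ge \ss_n$ overwhelms the prefactor $S_n/\ss_n^2$, giving a lower bound of order $S_n/\ss_n\ge\sqrt{\ln\ss_n^2}\to\infty$. The detour through a varying $C_n$ is unnecessary, as you yourself conclude.
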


\begin{proof}  
Suppose that Reality chooses a path such that $\ss_\infty = \infty$, 
$\limsup_n S_n/ \ss_n^2 \le 0$  and
\[
\limsup_n \frac{S_n}{\sqrt{\ss_n^2 \ln \ss_n^2}} > 1.
\]
Then for some small positive $C$, we have
\[
\limsup_n \frac{S_n}{\sqrt{\ss_n^2 \ln \ss_n^2}} > \frac{1}{\sqrt{1-2C}}
\]
and 
\[
\frac{S_n^2}{\ss_n^2} \ge \frac{\ln \ss_n^2}{1-2C}
\]
for infinitely many $n$.  For such an $n$,
\begin{align*}
\frac{\sqrt{C}}{6} \frac{S_n}{\ss_n^2} \pi\left( \frac{S_n}{\ss_n^2}\right)
\exp\left( \frac{(1-2C) S_n^2}{2 \ss_n^2}\right)
& \ge \frac{\sqrt{C}}{6} \exp\left( \frac{(1-2C) S_n^2}{2 \ss_n^2} + \ln \frac{S_n}{\ss_n^2} \right) \\
& \ge \frac{\sqrt{C}}{6} \exp\left( \frac{\ln \ss_n^2}{2} + \ln \frac{S_n}{\ss_n} + \ln \frac{1}{\ss_n}\right) \\
&= \frac{\sqrt{C}}{6} \exp\left( \ln \frac{S_n}{\ss_n}\right) = \frac{\sqrt{C}}{6} \frac{S_n}{\ss_n} \\
& \ge \frac{\sqrt{C}}{6} \frac{1}{\sqrt{1-2C}} \sqrt{\ln \ss_n^2}.
\end{align*}
Since $\lim_n \ss_n = \infty$, we have $\limsup_n \cps_n^\pi = \infty$.
\end{proof}

In Proposition \ref{prop:uniform}, we considered continuous uniform mixture.
However the same proof can be applied to discrete mixture setting.  In particular the 
discrete mixture in the basic proof of SLLN in Chapter 3 of \cite{ShaferVovk2001Probability}
achieves the same rate $\sqrt{\ss_n^2 \ln \ss_n^2}$ of \eqref{eq:uniform-prior}.
Note also that this rate is the same
as the rate of SLLN of dynamic strategies studied in 
\cite{KumonTakemuraTakeuchi2011Sequential} and
\cite{KumonTakemura2008AISM}.

\subsection{A refined lower bound of the capital process}
\label{subsec:another-bound}

By Assumption \ref{ass:1}, $\pi(\epsilon)\ge \delta_\pi > 0$ holds around the
origin. This implies that for any prior satisfying Assumption \ref{ass:1}, the
rate of convergence $S_n=O(\sqrt{\ss_n^2 \ln \ss_n^2})$ already holds.  
In fact, for $\epsilon<\epsilon_\pi$ write 
\[
\pi(\epsilon) =  \delta_\pi + (\pi(\epsilon)-\delta_\pi).
\]
Then 
\[
\cps_n^\pi \ge \delta_\pi \int_0^{\epsilon_\pi} \prod_{i=1}^n (1+\epsilon x_i) d\epsilon
\]
and the result for the uniform prior applies to the right-hand side.
Hence in considering other priors satisfying  Assumption \ref{ass:1}, we can only 
consider paths such that $\limsup_n S_n /\sqrt{\ss_n^2 \ln \ss_n^2} \le 1$.  In particular
$S_n^3 = O((\ss_n^2 \ln \ss_n^2)^{3/2})$ and $S_n^3/\ss_n^4 \rightarrow 0$ as $n\rightarrow\infty$.
For such paths, we consider  maximizing the right-hand side of \eqref{eq:basic-inequality}  with respect to 
$C$. For the case $S_n/\ss_n$ is large, we put $C=\ss_n^2/S_n^2$ and obtain the following theorem.

\begin{theorem}
\label{thm:basic-inequality-2}
In OUFG, if $S_n>0$, $S_n^2 / \ss_n^2 > \max(2,1/\epsilon_\pi)$ and $S_n^3/\ss_n^4 <1/2$, then
\begin{equation}
\label{eq:basic-inequality-1}
\cps_n^{\pi} \ge  
\frac{1}{6e} \frac{1}{\ss_n}
\pi\left( \frac{S_n}{\ss_n^2}\right)
\exp\left(\frac{S_n^2}{2\ss_n^2}\right).
\end{equation}
\end{theorem}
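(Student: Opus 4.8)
The plan is to apply Theorem \ref{thm:basic-inequality} with the choice $C = \ss_n^2/S_n^2$, as indicated above, and simply read off the resulting bound; this value of $C$ is, up to a constant factor, the one maximizing the right-hand side of \eqref{eq:basic-inequality} in the regime where $S_n/\ss_n$ is large, while also keeping the admissibility conditions in a clean form.

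First I would verify that $C=\ss_n^2/S_n^2$ is admissible for Theorem \ref{thm:basic-inequality}. Observe that $\ss_n>0$, since $S_n^2/\ss_n^2>2$ rules out $\ss_n=0$. The requirement $C<1/2$ is equivalent to $S_n^2/\ss_n^2>2$, and $C<\epsilon_\pi$ is equivalent to $S_n^2/\ss_n^2>1/\epsilon_\pi$; both are covered by the hypothesis $S_n^2/\ss_n^2>\max(2,1/\epsilon_\pi)$. Positivity of $C$ is immediate, and the remaining condition $0<S_n/\ss_n^2<C/2=\ss_n^2/(2S_n^2)$ rearranges, using $S_n>0$, to $S_n^3/\ss_n^4<1/2$, which is exactly the third hypothesis. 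Hence \eqref{eq:basic-inequality} is available.

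Then I would substitute. With $C=\ss_n^2/S_n^2$ we have $\sqrt C=\ss_n/S_n$, so the prefactor $\frac{\sqrt C}{6}\,\frac{S_n}{\ss_n^2}$ simplifies to $\frac{1}{6\ss_n}$, and the exponent becomes
\[
\frac{(1-2C)S_n^2}{2\ss_n^2}=\frac{S_n^2}{2\ss_n^2}-\frac{C S_n^2}{\ss_n^2}=\frac{S_n^2}{2\ss_n^2}-1 .
\]
Factoring out $e^{-1}$ then gives precisely \eqref{eq:basic-inequality-1}. I do not expect a genuine obstacle here: the entire content sits in Theorem \ref{thm:basic-inequality}, and the only point needing care is the routine bookkeeping that pairs each of the three hypotheses of the present theorem with one of the three conditions needed to invoke the earlier one — in particular, recognizing that $S_n^3/\ss_n^4<1/2$ is exactly the transcription of $S_n/\ss_n^2<C/2$ under this choice of $C$.
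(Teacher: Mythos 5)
Your proof is correct and follows exactly the paper's own argument: both set $C=\ss_n^2/S_n^2$, check that the three hypotheses translate precisely into the admissibility conditions of Theorem \ref{thm:basic-inequality}, and then substitute to get the prefactor $\frac{1}{6\ss_n}$ and the exponent $\frac{S_n^2}{2\ss_n^2}-1$. Your write-up is in fact slightly more explicit than the paper's about which hypothesis covers which condition; no issues.
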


\begin{proof}
Let $C=\ss_n^2/S_n^2.$  By the assumption $C<\min(1/2,\epsilon_\pi)$.   Also
\[
\frac{S_n}{\ss_n^2} < \frac{\ss_n^2}{2S_n^2} = \frac{C}{2}. 
\]
Hence the conditions of Theorem \ref{thm:basic-inequality} are satisfied.  
Furthermore with this $C$
\[
\exp\left( \frac{(1-2C)S_n^2}{2\ss_n^2}\right)
=\exp\left( \frac{S_n^2}{2\ss_n^2}-1\right).
\]
Substituting the above inequalities into 
\eqref{eq:basic-inequality} we obtain \eqref{eq:basic-inequality-1}.
\end{proof}

\begin{remark}
\label{rem:upper-limit}
In Section \ref{sec:equivalence} we will multiply $\pi(\epsilon)$ by a
decreasing function $c(\epsilon)$ and use $c(\epsilon) \pi(\epsilon)$ as the prior.
Then $\epsilon c(\epsilon) \pi(\epsilon)$ may not be increasing.  
However in view of the range of integration in \eqref{eq:sqrt-c-upper-bound}, we can still 
bound the capital process by
\begin{equation}
\label{eq:bound-for-general-efkp}
\cps_n^{\pi} \ge   c(u_n)
\frac{1}{6e} \frac{1}{\ss_n}
\pi\left( \frac{S_n}{\ss_n^2}\right)
\exp\left( \frac{S_n^2}{2\ss_n^2}\right), \qquad
u_n = \frac{1+\ss_n/S_n}{1+\ss_n^2/S_n^2} \frac{S_n}{\ss_n^2}.
\end{equation}
\end{remark}

For the rest of this section we apply Theorem \ref{thm:basic-inequality-2} to some typical 
nonuniform priors.

\subsection{Power prior}
\label{subsec:power-prior}
For $0 < a < 1$, let
\begin{equation}
\label{eq:power-prior}
\pi(\epsilon) = \epsilon^{-a}, \quad \epsilon\in (0,1).
\end{equation}
For this power prior, the following result holds.
\begin{proposition}
\label{prop:power}
In OUFG, by the power  prior, Skeptic weakly forces
\[
\ss_\infty = \infty \ \ \Rightarrow \ \ \limsup_n \frac{S_n}{\sqrt{(1-a)\ss_n^2\ln \ss_n^2}} \le 1
\]
\end{proposition}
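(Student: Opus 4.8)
The plan is to mimic the proof of Proposition \ref{prop:uniform}, but feed the power prior $\pi(\epsilon)=\epsilon^{-a}$ into the refined bound of Theorem \ref{thm:basic-inequality-2} (or \eqref{eq:basic-inequality} directly). Suppose Reality picks a path with $\ss_\infty=\infty$, $\limsup_n S_n/\ss_n^2\le 0$, and $\limsup_n S_n/\sqrt{(1-a)\ss_n^2\ln\ss_n^2}>1$. Then for some small $C>0$ there are infinitely many $n$ with $S_n^2/\ss_n^2 \ge (1+C)\ln\ss_n^2$, say; along such a subsequence $S_n/\ss_n\to\infty$, so eventually $S_n^2/\ss_n^2>\max(2,1/\epsilon_\pi)$, and since the power prior satisfies Assumption \ref{ass:1} (with $\epsilon\pi(\epsilon)=\epsilon^{1-a}$ increasing on $(0,1)$ because $a<1$) the preliminary remark of Section \ref{subsec:another-bound} gives $S_n=O(\sqrt{\ss_n^2\ln\ss_n^2})$, hence $S_n^3/\ss_n^4\to 0<1/2$. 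So Theorem \ref{thm:basic-inequality-2} applies along the subsequence.

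Next I would substitute $\pi(S_n/\ss_n^2)=(S_n/\ss_n^2)^{-a}=(\ss_n^2/S_n)^{a}$ into \eqref{eq:basic-inequality-1}, obtaining
\[
\cps_n^\pi \ge \frac{1}{6e}\,\frac{1}{\ss_n}\left(\frac{\ss_n^2}{S_n}\right)^{a}\exp\!\left(\frac{S_n^2}{2\ss_n^2}\right).
\]
Taking logarithms, the exponent is $\tfrac{S_n^2}{2\ss_n^2} - \ln\ss_n + a\ln(\ss_n^2/S_n) + \ln\tfrac{1}{6e}$. Using $S_n^2/\ss_n^2 \ge (1+C)\ln\ss_n^2 = 2(1+C)\ln\ss_n$ on the subsequence, the first two terms contribute at least $(1+C)\ln\ss_n - \ln\ss_n = C\ln\ss_n$, while the term $a\ln(\ss_n^2/S_n)$ is bounded below (using $S_n=O(\sqrt{\ss_n^2\ln\ss_n^2})$, one gets $\ss_n^2/S_n \ge c\,\ss_n/\sqrt{\ln\ss_n}$, whose logarithm is $\ge \tfrac12\ln\ss_n - O(\ln\ln\ss_n)$, which is nonnegative for large $n$ — in fact this only helps). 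Hence $\ln\cps_n^\pi \ge C\ln\ss_n + O(\ln\ln\ss_n) + O(1)\to\infty$ along the subsequence, so $\limsup_n\cps_n^\pi=\infty$ and Skeptic weakly forces the stated implication.

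To make the threshold match the claimed constant $1$ exactly, I would be slightly more careful with the choice of $C$: from $\limsup_n S_n/\sqrt{(1-a)\ss_n^2\ln\ss_n^2}>1$ pick $C$ small so that the $\limsup$ exceeds $1/\sqrt{1-2C}$ (as in Proposition \ref{prop:uniform}), giving $S_n^2/\ss_n^2 \ge \ln\ss_n^2/(1-2C)$ for infinitely many $n$; then $(1-2C)S_n^2/(2\ss_n^2)\ge \tfrac12\ln\ss_n^2 = \ln\ss_n$, and I would instead work from \eqref{eq:basic-inequality} with this fixed $C$ rather than the $n$-dependent $C$ of Theorem \ref{thm:basic-inequality-2}, so that $\exp((1-2C)S_n^2/(2\ss_n^2)) \ge \ss_n$. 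Combined with $\tfrac{S_n}{\ss_n^2}\pi(\tfrac{S_n}{\ss_n^2}) = (\ss_n^2/S_n)^{a-1}$, and the fact that $S_n/\ss_n^2\to 0$ makes this factor grow like a positive power of $\ss_n$ (since $a-1<0$ and $S_n/\ss_n^2\downarrow 0$), the product tends to infinity.

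The main obstacle is bookkeeping rather than a genuine difficulty: one must confirm that the power prior satisfies Assumption \ref{ass:1} (immediate, as $\epsilon^{1-a}$ is increasing), verify the side conditions of Theorem \ref{thm:basic-inequality-2} hold along the chosen subsequence (using the already-established $O(\sqrt{\ss_n^2\ln\ss_n^2})$ rate), and track the extra factor $(\ss_n^2/S_n)^{a}$ carefully enough to see it does not degrade the $\ln\ss_n$ margin — indeed, since $S_n/\ss_n^2\to 0$, this factor is $\ge 1$ eventually and only helps, so no loss of constant occurs and the threshold $1$ is sharp for this argument.
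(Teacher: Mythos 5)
Your overall plan --- check Assumption \ref{ass:1} for $\pi(\epsilon)=\epsilon^{-a}$, restrict to paths with $S_n=O(\sqrt{\ss_n^2\ln\ss_n^2})$ so that the side conditions of Theorem \ref{thm:basic-inequality-2} hold along the subsequence, then take logarithms of \eqref{eq:basic-inequality-1} --- is exactly the paper's route. But the quantitative bookkeeping has a genuine gap, and it sits precisely where the factor $(1-a)$, which is the whole content of the proposition, enters. From $\limsup_n S_n/\sqrt{(1-a)\ss_n^2\ln\ss_n^2}>1$ you get, for infinitely many $n$, $S_n^2/\ss_n^2\ge(1+C)(1-a)\ln\ss_n^2$, \emph{not} $(1+C)\ln\ss_n^2$ as you wrote. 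With the correct bound your ``first two terms'' contribute
\[
\frac{S_n^2}{2\ss_n^2}-\ln\ss_n\ \ge\ \bigl[(1+C)(1-a)-1\bigr]\ln\ss_n=\bigl[C(1-a)-a\bigr]\ln\ss_n,
\]
which is \emph{negative} for small $C$, not $C\ln\ss_n$. The missing $a\ln\ss_n$ must come from the prior factor $a\ln(\ss_n^2/S_n)$, which you dismiss as ``bounded below'' and ``only helps'': it is in fact essential, and the lower bound you state for it, $\tfrac12\ln\ss_n-O(\ln\ln\ss_n)$, is too weak --- it recovers only $\tfrac{a}{2}\ln\ss_n$, leaving a net coefficient $C(1-a)-\tfrac{a}{2}<0$ for small $C$. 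The correct estimate $\ss_n^2/S_n\ge\ss_n/(K\sqrt{\ln\ss_n^2})$ gives $a\ln(\ss_n^2/S_n)\ge a\ln\ss_n-O(\ln\ln\ss_n)$, and then the total exponent is $C(1-a)\ln\ss_n-O(\ln\ln\ss_n)\to\infty$, which does close the argument.

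The same slip recurs in your final paragraph: with the $(1-a)$ restored, $\exp\bigl((1-2C)S_n^2/(2\ss_n^2)\bigr)\ge\ss_n^{1-a}$ rather than $\ss_n$, and the prefactor $\frac{S_n}{\ss_n^2}\pi\bigl(\frac{S_n}{\ss_n^2}\bigr)=(S_n/\ss_n^2)^{1-a}$ \emph{decays} like $\ss_n^{-(1-a)}$ times a power of $\ln\ss_n^2$ --- it does not ``grow like a positive power of $\ss_n$'' --- so along that route divergence comes only from the residual logarithmic factor. The paper sidesteps all of this by writing $\frac{1}{\ss_n}\pi(S_n/\ss_n^2)\exp(S_n^2/(2\ss_n^2))=\ss_n^{-(1-a)}f(S_n^2/\ss_n^2)$ with $f(x)=x^{-a/2}e^{x/2}$ increasing for large $x$, substituting the lower bound $(1+\delta)(1-a)\ln\ss_n^2$ once into $f$, and reading off $\ss_n^{\delta(1-a)}/(\ln\ss_n^2)^{a/2}\to\infty$. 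Either adopt that grouping or redo your three contributions keeping the $(1-a)$ throughout; as written the estimates do not combine to give divergence.
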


\begin{proof}
Consider the case  $S_n > \sqrt{(1+\delta) (1-a)\ss_n^2 \ln \ss_n^2}$ for some $\delta>0$
for infinitely many $n$.
Note that the function $f(x)=x^{-a/2}e^{x/2}$ is increasing 
for sufficiently large $x$. 
Then for sufficiently large $n$,
\begin{align*}
\frac{1}{\ss_n}
\left( \frac{S_n}{\ss_n^2}\right)^{-a}
\exp\left( \frac{S_n^2}{2 \ss_n^2}\right)
&
= \ss_n^{-(1-a)} \left( \frac{S_n}{\ss_n}\right)^{-a} \exp\left( \frac{S_n^2}{2 \ss_n^2}\right)\\
&\ge \ss_n^{-(1-a)}\, \big((1+\delta) (1-a)\ln \ss_n^2\big)^{-a/2} \, 
\exp\left(\frac{(1+\delta)(1-a)}{2}\ln \ss_n^2\right)\\
&= \frac{1}{\big((1+\delta) (1-a)\big)^{a/2}}\,  \frac{\ss_n^{\delta(1-a)}}{(\ln \ss_n^2)^{a/2}} 
\; \uparrow\infty  \quad (n\rightarrow\infty).
\end{align*}
\end{proof}
\subsection{Prior for the validity of LIL}
\label{subsec:prior-for-usual-lil}

Here we present a prior for the validity of LIL.
Let $\epsilon_0>0$ sufficiently small such that $\ln \ln \epsilonr$ is positive for $\epsilon \le \epsilon_0$.
Define
\begin{equation}
\label{eq:prior-for-lil}
\pi(\epsilon)= \frac{1}{\epsilon \, \ln \epsilonr \, \big(\ln \ln  \epsilonr\big)^2}, \quad \epsilon \in (0,\epsilon_0).
\end{equation}
This $\pi$ is integrable around the origin.  

\begin{proposition}
\label{prop:usual-lil}
In OUFG, by the prior \eqref{eq:prior-for-lil}, Skeptic weakly forces
\begin{equation}
\label{eq:usual-lil-prior}
\ss_\infty = \infty \ \ \Rightarrow \ \ \limsup_n \frac{S_n}{\sqrt{2\ss_n^2 \ln \ln \ss_n^2}} \le 1.
\end{equation}
\end{proposition}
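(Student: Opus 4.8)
The plan is to follow the pattern of the proofs of Propositions~\ref{prop:uniform} and~\ref{prop:power}: verify that the prior~\eqref{eq:prior-for-lil} satisfies Assumption~\ref{ass:1}, then feed it into the refined bound of Theorem~\ref{thm:basic-inequality-2} and argue by contradiction. For the first point, $\pi$ is nonnegative and integrable near $0$, it tends to $\infty$ as $\epsilon\downarrow 0$ (so $\pi\ge\delta_\pi$ on a sufficiently small $(0,\epsilon_\pi)$), and $\epsilon\pi(\epsilon)=1/\big(\ln\epsilonr\,(\ln\ln\epsilonr)^2\big)$ is increasing on a neighbourhood of $0$; hence Theorem~\ref{thm:basic-inequality-2} is available. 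Moreover, by the discussion at the beginning of Section~\ref{subsec:another-bound} (this $\pi$ dominates a multiple of the uniform prior near the origin), it suffices to consider Reality's paths with $\ss_\infty=\infty$ and $\limsup_n S_n/\sqrt{\ss_n^2\ln\ss_n^2}\le 1$; in particular $S_n/\ss_n^2\to 0$ and $S_n^3/\ss_n^4\to 0$ along such paths.

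Suppose, toward a contradiction, that along such a path $\limsup_n S_n/\sqrt{2\ss_n^2\ln\ln\ss_n^2}>1$. Then there is $\delta>0$ with $S_n^2/(2\ss_n^2)>(1+\delta)\ln\ln\ss_n^2$, equivalently $\exp\!\big(S_n^2/(2\ss_n^2)\big)>(\ln\ss_n^2)^{1+\delta}$, for infinitely many $n$. For all large such $n$ the hypotheses of Theorem~\ref{thm:basic-inequality-2} hold, because $S_n^2/\ss_n^2\ge 2(1+\delta)\ln\ln\ss_n^2$ eventually exceeds $\max(2,1/\epsilon_\pi)$, while $S_n^3/\ss_n^4\to 0$. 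Hence
\[
\cps_n^{\pi}\ \ge\ \frac{1}{6e}\,\frac{1}{\ss_n}\,\pi\!\left(\frac{S_n}{\ss_n^2}\right)\exp\!\left(\frac{S_n^2}{2\ss_n^2}\right).
\]
Writing $t_n=S_n^2/\ss_n^2$, so $\ss_n/S_n=1/\sqrt{t_n}$ and $\ss_n^2/S_n=\ss_n/\sqrt{t_n}$, the factor $1/\ss_n$ cancels against the leading $\ss_n^2/S_n$ inside $\pi$, leaving
\[
\cps_n^{\pi}\ \ge\ \frac{1}{6e}\,\frac{1}{\sqrt{t_n}}\,\frac{\exp(t_n/2)}{\ln(\ss_n/\sqrt{t_n})\,\big(\ln\ln(\ss_n/\sqrt{t_n})\big)^2}.
\]

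Next I would bound the two logarithmic factors crudely from above using $\ss_n/\sqrt{t_n}\le\ss_n$, namely $\ln(\ss_n/\sqrt{t_n})\le\tfrac12\ln\ss_n^2$ and $\ln\ln(\ss_n/\sqrt{t_n})\le\ln\ln\ss_n^2$ for large $n$, and then exploit that $t\mapsto e^{t/2}/\sqrt{t}$ is increasing for $t\ge 1$, substituting $t_n\ge 2(1+\delta)\ln\ln\ss_n^2$. This gives
\[
\cps_n^{\pi}\ \ge\ \frac{1}{3e\sqrt{2(1+\delta)}}\,\frac{(\ln\ss_n^2)^{\delta}}{(\ln\ln\ss_n^2)^{5/2}},
\]
whose right-hand side tends to $\infty$ along the subsequence since $\delta>0$ and $\ss_n\to\infty$. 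Thus $\limsup_n\cps_n^{\pi}=\infty$, contradicting Reality's collateral duty, which proves~\eqref{eq:usual-lil-prior}.

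The step I expect to be the main obstacle is keeping the $\ss_n/S_n$ factor in its sharp form $1/\sqrt{t_n}$ rather than using the cruder bound $1/\sqrt{\ln\ss_n^2}$ that the LIL-rate restriction would suggest: the crude bound loses a factor $\sqrt{\ln\ss_n^2}$ and only produces divergence for $\delta$ bounded away from $0$, whereas pairing $1/\sqrt{t_n}$ with $\exp(t_n/2)$ through the monotonicity of $e^{t/2}/\sqrt{t}$ is exactly what makes the argument succeed for every $\delta>0$. The manipulation of the nested logarithms $\ln\ln(\ss_n^2/S_n)$ is then routine, since $S_n/\ss_n^2\to 0$ forces $\ss_n^2/S_n\to\infty$ and all the relevant logarithmic quantities are comparable to their counterparts in $\ss_n^2$.
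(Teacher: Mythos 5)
Your proposal is correct and follows essentially the same route as the paper's proof: apply Theorem \ref{thm:basic-inequality-2}, peel off the $1/\epsilon$ factor of the prior so that $\frac{1}{\ss_n}\pi(S_n/\ss_n^2)e^{S_n^2/2\ss_n^2}$ becomes $(S_n^2/\ss_n^2)^{-1/2}e^{S_n^2/2\ss_n^2}$ times the slowly varying logarithmic factors, use the monotonicity of $x^{-1/2}e^{x/2}$ with $S_n^2/\ss_n^2\ge 2(1+\delta)\ln\ln\ss_n^2$, and bound $\ln(\ss_n^2/S_n)$ and $\ln\ln(\ss_n^2/S_n)$ by their counterparts in $\ss_n^2$; your final bound $(\ln\ss_n^2)^{\delta}/(\ln\ln\ss_n^2)^{5/2}$ is exactly the paper's $\exp(\delta\ln\ln\ss_n^2-\tfrac52\ln\ln\ln\ss_n^2)$ up to constants. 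The additional bookkeeping you supply (verifying Assumption \ref{ass:1} and the hypotheses of Theorem \ref{thm:basic-inequality-2}) is consistent with what the paper leaves implicit.
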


\begin{proof}
Again we use the fact that $f(x)=x^{-1/2} e^{x/2}$ is increasing for sufficiently large $x$.
For $S_n > \max(\sqrt{2(1+\delta) \ss_n^2 \ln \ln \ss_n^2},1)$ for some  $\delta>0$ and sufficiently large $n$, 
we have 
\begin{align*}
\frac{1}{\ss_n}  \pi\left(\frac{S_n}{\ss_n^2}\right) \exp\left( \frac{S_n^2}{2 \ss_n^2}\right)
&=  \frac{S_n}{\ss_n^2}\pi\left(\frac{S_n}{\ss_n^2}\right) \;
\left(\frac{S_n}{\ss_n}\right)^{-1} \exp\left( \frac{S_n^2}{2 \ss_n^2}\right) \\
&=   \frac{1}{ \ln \frac{\ss_n^2}{S_n} \, \left(\ln \ln \frac{\ss_n^2}{S_n}\right)^2} \,
\left( \frac{S_n}{\ss_n}\right)^{-1}\exp\left( \frac{S_n^2}{2 \ss_n^2}\right)\\
&\ge \frac{1}{\big(2(1+\delta)\ln\ln \ss_n^2\big)^{1/2}}
\exp\left( (1+\delta) \ln \ln \ss_n^2  - \ln \ln \frac{\ss_n^2}{S_n} -2\ln \ln\ln \frac{\ss_n^2}{S_n} \right) \\
& \ge \frac{1}{\big(2(1+\delta)\big)^{1/2}} \exp\left( (1+\delta) \ln \ln \ss_n^2  - \ln \ln \ss_n^2 
- \frac{5}{2}\ln \ln\ln \ss_n^2  \right) \\
& = \frac{1}{\big(2(1+\delta)\big)^{1/2}} \exp\left( \delta \ln \ln \ss_n^2
 - \frac{5}{2} \ln \ln\ln \ss_n^2  \right) 
\; \uparrow\infty  \quad (n\rightarrow\infty).
\end{align*}
\end{proof}


\subsection{Priors for the validity of typical EFKP-LIL}
\label{subsec:prior-for-typical-EFKP-LIL}

Here we generalize the prior \eqref{eq:usual-lil-prior} in view of EFKP-LIL.
We give a more general and complete treatment of EFKP-LIL in Section
\ref{sec:equivalence}.

Write
\[
\ln_b x = \underbrace{\ln \ln \dots \ln}_{b\  \text{times}} x.
\]
The following prior density
\begin{equation}
\label{eq:prior-for-EFKP-LIL}
\pi(\epsilon) = \frac{1}{\epsilon \big(\ln \epsilonr\big) \, \big(\ln_2 \epsilonr\big)\cdots
\big(\ln_{b-1}\epsilonr\big) \big(\ln_b\epsilonr\big)^{1+\gamma}},  \qquad \gamma>0
\end{equation}
is integrable around the origin.   We compare this with the bound $\ss_n \psi(\ss_n^2)$, where
\begin{equation}
\label{eq:bound-for-EFKP-LIL}
\psi(\ss_n^2)=\sqrt{2 \ln_2 \ss_n^2 + 3 \ln_3 \ss_n^2+ 2 \ln_4 \ss_n^2+ \dots + 2 \ln_b \ss_n^2 + 2(1+2\gamma) \ln_{b+1} \ss_n^2}.
\end{equation}

For notational simplicity we take $b\ge 4$ in \eqref{eq:prior-for-EFKP-LIL}, since the coefficient
of  $\ln_k$ is 2 except for  $k=3$ in \eqref{eq:bound-for-EFKP-LIL}.
Note that $\gamma$ in \eqref{eq:prior-for-EFKP-LIL} is multiplied by 2 
in \eqref{eq:bound-for-EFKP-LIL}.
This is needed, because unlike the usual LIL in \eqref{eq:usual-lil-prior}, where we considered the ratio of $S_n$ to $\sqrt{2 \ss_n^2 \ln \ln \ss_n^2}$, in EFKP-LIL we need to consider the difference $S_n - \ss_n \psi(\ss_n^2)$.  This difference corresponds to multiplication of $\pi(\epsilon)$ by $c(\epsilon)$
in the proof of Theorem \ref{thm:efkp-general-pi}.

\begin{proposition}
\label{prop:efkp-lil}
In OUFG, by the prior \eqref{eq:prior-for-EFKP-LIL}, Skeptic weakly forces
\[
\ss_\infty = \infty \ \ \Rightarrow \ \ 
S_n - \ss_n \psi(\ss_n^2) 
\le 0 \quad {a.a.},
\]
where $\psi(\ss_n^2)$ is defined in \eqref{eq:bound-for-EFKP-LIL}.
\end{proposition}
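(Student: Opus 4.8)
The plan is to imitate the proofs of Propositions~\ref{prop:usual-lil} and \ref{prop:power}, using the refined lower bound of Theorem~\ref{thm:basic-inequality-2} (in the form of \eqref{eq:basic-inequality-1}) for the capital process $\cps_n^\pi$ of the Bayesian strategy with prior \eqref{eq:prior-for-EFKP-LIL}. First I would observe that, by the discussion in Section~\ref{subsec:another-bound} (since $\pi\ge\delta_\pi$ near the origin), it suffices to restrict attention to paths along which $S_n/\sqrt{\ss_n^2\ln\ss_n^2}\le 1$ eventually, so all the hypotheses of Theorem~\ref{thm:basic-inequality-2} — namely $S_n^2/\ss_n^2>\max(2,1/\epsilon_\pi)$ and $S_n^3/\ss_n^4<1/2$ — hold for $n$ large along the relevant subsequence. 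Then I would argue by contradiction: suppose Reality chooses a path with $\ss_\infty=\infty$ on which $S_n > \ss_n\psi(\ss_n^2)$ for infinitely many $n$, and show $\limsup_n \cps_n^\pi=\infty$ on this path, contradicting the collateral duty.

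The core computation is to substitute $S_n/\ss_n^2$ into \eqref{eq:prior-for-EFKP-LIL} and bound \eqref{eq:basic-inequality-1} from below. As in Proposition~\ref{prop:usual-lil} I would write
\[
\frac{1}{\ss_n}\pi\!\left(\frac{S_n}{\ss_n^2}\right)\exp\!\left(\frac{S_n^2}{2\ss_n^2}\right)
= \frac{S_n}{\ss_n^2}\,\pi\!\left(\frac{S_n}{\ss_n^2}\right)\left(\frac{S_n}{\ss_n}\right)^{-1}\exp\!\left(\frac{S_n^2}{2\ss_n^2}\right),
\]
so that the leading $\epsilon$ in the denominator of $\pi$ cancels, leaving only the product of iterated logarithms $(\ln\frac{\ss_n^2}{S_n})(\ln_2\frac{\ss_n^2}{S_n})\cdots(\ln_b\frac{\ss_n^2}{S_n})^{1+\gamma}$ in the denominator and the factor $(S_n/\ss_n)^{-1}e^{S_n^2/(2\ss_n^2)}$. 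Using the assumed lower bound $S_n^2/\ss_n^2 > \psi(\ss_n^2)^2$ together with the monotonicity of $x\mapsto x^{-1/2}e^{x/2}$ for large $x$ (the same trick used twice already), the exponential factor dominates: $e^{S_n^2/(2\ss_n^2)}$ beats $(S_n/\ss_n)^{-1}$ down to $e^{\psi(\ss_n^2)^2/2}/\psi(\ss_n^2)$, and $e^{\psi(\ss_n^2)^2/2}$ exactly equals $\sqrt{(\ln\ss_n^2)^2(\ln_2\ss_n^2)^3(\ln_3\ss_n^2)^2\cdots(\ln_b\ss_n^2)^2(\ln_{b+1}\ss_n^2)^{2(1+2\gamma)}}$ by the definition \eqref{eq:bound-for-EFKP-LIL} of $\psi$. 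Finally I would replace each $\ln_k\frac{\ss_n^2}{S_n}$ appearing in the denominator by a constant multiple of $\ln_k\ss_n^2$ (legitimate since $S_n$ is squeezed between order $\sqrt{\ss_n^2\ln\ss_n^2}$ above and, on the bad subsequence, order $\ss_n\sqrt{\ln_2\ss_n^2}$ below, so the iterated logarithms of $\ss_n^2/S_n$ and of $\ss_n^2$ are comparable), and check that the surviving power of $\ln_{b+1}\ss_n^2$ is $\tfrac12\cdot 2(1+2\gamma) - (1+\gamma) = \gamma > 0$, while every other iterated-logarithm factor cancels exactly. Hence the whole expression tends to $\infty$.

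The main obstacle is purely bookkeeping: matching the coefficients in the exponent $e^{\psi(\ss_n^2)^2/2}$ against the product of $\ln_k\frac{\ss_n^2}{S_n}$ factors from $\pi$, while carefully tracking that replacing $\frac{\ss_n^2}{S_n}$ by $\ss_n^2$ inside the iterated logs only costs bounded multiplicative constants and an error that is swept into lower-order iterated-log terms. One subtlety worth flagging, as in Remark~\ref{rem:upper-limit}: strictly speaking $\epsilon\pi(\epsilon)$ for \eqref{eq:prior-for-EFKP-LIL} is \emph{decreasing} near $0$, not increasing, so Assumption~\ref{ass:1}(3) fails; but this prior can be obtained from a prior satisfying Assumption~\ref{ass:1} (e.g.\ the uniform prior, or one of the earlier $\ln$-type priors) multiplied by a decreasing $c(\epsilon)$, so the bound \eqref{eq:bound-for-general-efkp} applies with the extra factor $c(u_n)$; since $u_n$ is comparable to $S_n/\ss_n^2$ and $c$ decreases only polylogarithmically, $c(u_n)$ contributes at most a bounded power of $\ln\ss_n^2$ that is again absorbed by the surplus $(\ln_{b+1}\ss_n^2)^\gamma$. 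I would therefore carry out the estimate using \eqref{eq:bound-for-general-efkp} rather than \eqref{eq:basic-inequality-1}, which is the honest version for this $\pi$.
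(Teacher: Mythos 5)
Your proposal follows the paper's proof essentially step for step: restrict to paths with $S_n\le\sqrt{\ss_n^2\ln\ss_n^2}$ eventually so that the hypotheses of Theorem~\ref{thm:basic-inequality-2} hold on the bad subsequence, use the monotonicity of $x^{-1/2}e^{x/2}$ to reduce the exponential factor to $\psi(\ss_n^2)^{-1}e^{\psi(\ss_n^2)^2/2}$, and match iterated logarithms against the denominator of $\pi$. Two corrections to your bookkeeping, one of which would sink the computation if taken literally. First, the detour through Remark~\ref{rem:upper-limit} is based on a false premise: for the prior \eqref{eq:prior-for-EFKP-LIL} we have $\epsilon\pi(\epsilon)=\big[(\ln\epsilonr)(\ln_2\epsilonr)\cdots(\ln_b\epsilonr)^{1+\gamma}\big]^{-1}$, and each $\ln_k\epsilonr$ \emph{decreases} as $\epsilon$ increases, so $\epsilon\pi(\epsilon)$ is increasing near the origin (it tends to $0$ as $\epsilon\downarrow0$, which is exactly what Assumption~\ref{ass:1}(3) together with integrability forces). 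Hence Assumption~\ref{ass:1} holds and \eqref{eq:basic-inequality-1} applies directly. Second, your explicit product for $e^{\psi(\ss_n^2)^2/2}$ is off by one iterated-log level: exponentiating \eqref{eq:bound-for-EFKP-LIL} gives
\[
e^{\psi(\ss_n^2)^2/2}=(\ln\ss_n^2)\,(\ln_2\ss_n^2)^{3/2}\,(\ln_3\ss_n^2)\cdots(\ln_{b-1}\ss_n^2)\,(\ln_b\ss_n^2)^{1+2\gamma},
\]
ending at $\ln_b$, not $\ln_{b+1}$. Dividing by $\psi(\ss_n^2)\asymp(\ln_2\ss_n^2)^{1/2}$ and by the denominator of $\pi$, namely $(\ln\tfrac{\ss_n^2}{S_n})\cdots(\ln_b\tfrac{\ss_n^2}{S_n})^{1+\gamma}\le(\ln\ss_n^2)\cdots(\ln_b\ss_n^2)^{1+\gamma}$ (the paper only needs $S_n\ge1$ here, so no two-sided comparability of $\ln_k\tfrac{\ss_n^2}{S_n}$ with $\ln_k\ss_n^2$ is required), every level cancels except a surplus $(\ln_b\ss_n^2)^{(1+2\gamma)-(1+\gamma)}=(\ln_b\ss_n^2)^{\gamma}=\exp(\gamma\ln_{b+1}\ss_n^2)\uparrow\infty$, which is the paper's conclusion. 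With the product as you wrote it, the leftover would instead be $(\ln_b\ss_n^2)^{-\gamma}(\ln_{b+1}\ss_n^2)^{1+2\gamma}\to0$ and the argument would fail; your final arithmetic $(1+2\gamma)-(1+\gamma)=\gamma$ is right only once both exponents are placed at level $b$. With these two indexing fixes your proof coincides with the paper's.
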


\begin{proof}  
For  $S_n > \max(\ss_n\psi(\ss_n^2),1)$
and sufficiently large $n$, we have
\[
\left( \frac{S_n^2}{\ss_n^2}\right)^{-1/2}\exp\left( \frac{S_n^2}{2 \ss_n^2}\right)  \ge 
\frac{1}{\psi(\ss_n^2)} \exp\left(\frac{\psi(\ss_n^2)^2}{2}\right)
= \exp\left(\frac{\psi(\ss_n^2)^2}{2} - \ln\psi(\ss_n^2)\right).
\]
Here
\begin{align*}
\ln\psi(\ss_n^2) &= \frac{1}{2}\ln (2\ln_2 \ss_n^2) 
+ \frac{1}{2} \ln \left(1+ \frac{3 \ln_3 \ss_n^2+ 2 \ln_4 \ss_n^2+ \dots + 2 \ln_b \ss_n^2 + 2(1+2\gamma) \ln_{b+1} \ss_n^2}{\ln_2 \ss_n^2} \right) \\
&=\frac{1}{2}\ln_3 \ss_n^2 + \frac{1}{2}\ln 2 + o(1).
\end{align*}
Combining this with
\begin{align*}
\ln \left[\frac{S_n}{\ss_n^2} \pi\left( \frac{S_n}{\ss_n^2}\right)\right]
&= - \ln_2 \frac{\ss_n^2}{S_n} - \ln_3 \frac{\ss_n^2}{S_n}  - \dots - \ln_b \frac{\ss_n^2}{S_n} -
(1+\gamma)  \ln_{b+1} \frac{\ss_n^2}{S_n}\\
&\ge - \ln_2 \ss_n^2 - \ln_3 \ss_n^2  - \dots - \ln_b \ss_n^2 - 
(1+\gamma)  \ln_{b+1} \ss_n^2,
\end{align*}
we have
\[
\frac{1}{\ss_n}
\pi\left( \frac{S_n}{\ss_n^2}\right)
\exp\left( \frac{S_n^2}{2 \ss_n^2}\right)\ge
\exp\left( \gamma \ln_{b+1} \ss_n^2 - \frac{1}{2}\ln 2 - o(1) \right)
\; \uparrow\infty  \quad (n\rightarrow\infty).
\]
\end{proof}

\section{Equivalence of prior densities and the upper class}
\label{sec:equivalence}

The typical priors of the previous section suggest that higher concentration of the prior
around the origin corresponds to a tighter convergence rate of SLLN.
In particular, from the viewpoint of EFKP-LIL, it is of interest to establish the relation
between priors and the function of the upper class.  

Also it is natural to conjecture that the rate of SLLN implied by a Bayesian strategy
only depends on the rate of concentration of the prior around the origin.  This idea can be 
clarified if we classify priors with the same rate of concentration into the same class.
Let $\Pizero$ denote the set of priors $\pi$ satisfying Assumption \ref{ass:1} by
\begin{equation}
\Pizero = \{ \pi \mid \pi \ \text{satisfies Assumption \ref{ass:1}} \}.
\end{equation}

\subsection{The upper class}

In studying the validity of EFKP-LIL, the notion of upper class of functions is essential.
A positive function $\psi(\lambda)$ defined for $\lambda > M_\psi >0$ is said to \emph{belong to the upper class in OUFG} if
Skeptic can force the event
\[A_\infty=\infty\Rightarrow S_n-\ss_n\psi(\ss_n^2)\le0 \quad {a.a.}\]
For the terminology, see \cite{Revesz2013Random}.
We characterize the upper class by an integral test:
\begin{equation}
\label{eq:upper-class}
\int_{M_\psi}^\infty \psi(\lambda) e^{-\psi(\lambda)^2/2}
\frac{d\lambda}{\lambda} < \infty.
\end{equation}
A typical function in the upper class is $\psi$ given in \eqref{eq:bound-for-EFKP-LIL}.
For convenience, we put the following regularity conditions on $\psi$.

\begin{assumption}
\label{ass:2}
There exist some $M_\psi > 0$ and $\delta_\psi > 0$, such that
\begin{enumerate}
\item $\psi(\lambda)$ is a positive increasing function on $(M_\lambda,\infty)$,
\item the integral in  \eqref{eq:upper-class} is finite, and
\item for $\lambda>M_\psi$, we have
\begin{equation}
\label{eq:ass:2}
\lambda \psi(\lambda) e^{ -\psi(\lambda)^2/2}  > \delta_\psi.
\end{equation}
\end{enumerate}
\end{assumption}

If $\lim_{\lambda\uparrow\infty} \psi(\lambda)$ is finite, then the integral in
\eqref{eq:upper-class} diverges, since $\int_{M_\psi}^\infty d\lambda/\lambda = \infty$.
Hence $\lim_{\lambda\uparrow\infty} \psi(\lambda)=\infty$ for $\psi$ satisfying 
Assumption \ref{ass:2}. Furthermore the function $x e^{-x^2/2}$ is 
decreasing for $x>1$ and converges to zero as $x\rightarrow\infty$.  
Hence $\psi(\lambda) e^{ -\psi(\lambda)^2/2}$ is eventually 
decreasing to zero.  For EFKP-LIL, as the typical $\psi$ in \eqref{eq:bound-for-EFKP-LIL},
we are mainly concerned with functions $\psi$, for which 
$\psi(\lambda) e^{ -\psi(\lambda)^2/2}$ decreases to zero 
slower than any negative power of $\lambda$. Hence 
\eqref{eq:ass:2} is a mild regularity condition.
We denote the set of functions $\psi$ satisfying Assumption \ref{ass:2} by
\[
\Psiinfty = \{ \psi \mid \psi \ \text{satisfies Assumption \ref{ass:2}} \}.
\]

The goal is, when a $\psi\in\Psiinfty$ is given, to find a $\pi\in\Pizero$
that is a witness of the $\psi$ in the upper class.
Now we define two functionals $F:\Psiinfty \rightarrow \Pizero$ and 
$G:\Pizero \rightarrow \Psiinfty$ as follows.
\begin{definition}
\begin{align}
F[\psi](\epsilon) &= \frac{\psi(\epsilonr)}{\epsilon}  \exp\left(-\psi\left(\epsilonr\right)^2/2\right), 
\label{eq:def-F}
\\
G[\pi](\lambda)&= \sqrt{\beta\left(\frac{1}{\lambda}\right) + \ln \beta\left(\frac{1}{\lambda}\right)}, \qquad \beta(\epsilon)=\max(-2 \ln (\epsilon\pi(\epsilon)),1).
\label{eq:def-G}
\end{align}
\end{definition}
\begin{figure}[thbp]
\begin{center}
\includegraphics[width=11cm]{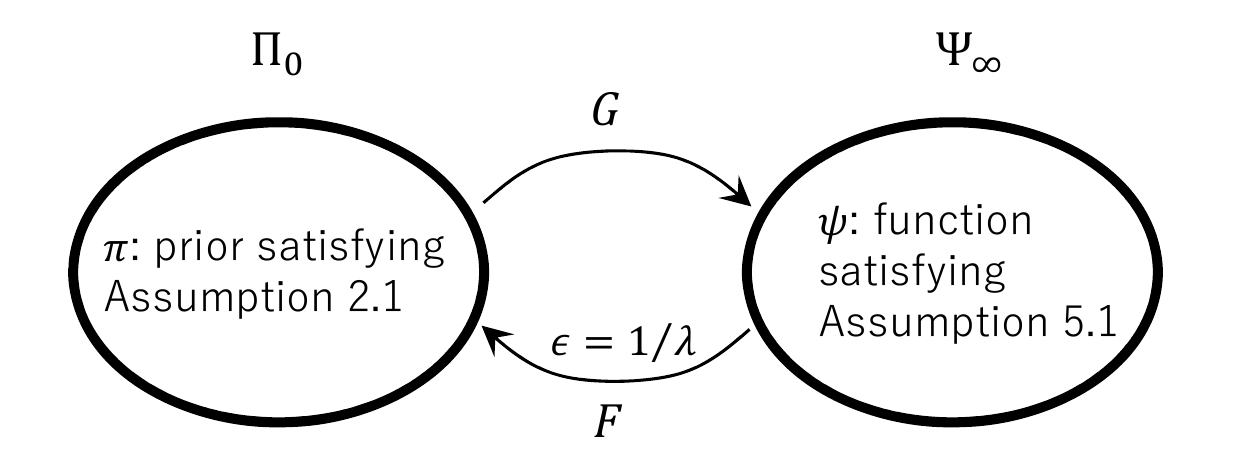}
\caption{Functionals $F$ and $G$}
\label{fig:1}
\end{center}
\end{figure}
See Figure \ref{fig:1}. 
These two functionals are ``asymptotic inverse functionals'' as we clarify 
in \eqref{eq:asymptotic-inverse} and Theorem \ref{thm:asymptotic-equivalence}.
Since we are interested in the case that $\epsilon$ is sufficiently small and $\lambda$ is sufficiently large, for the rest of this paper we only consider $\lambda$ that fulfills the condition $-2\ln(\frac{1}{\lambda}\pi(\frac{1}{\lambda})) \ge 1$. In fact, this condition holds true for
sufficiently large $\lambda$ because of the monotonicity of $\frac{1}{\lambda}\pi(\frac{1}{\lambda})$ and $\lim_{\lambda\uparrow\infty}\frac{1}{\lambda}\pi(\frac{1}{\lambda})=0$.
First we need to check that $F[\psi]\in \Pizero$ and $G[\pi]\in \Psiinfty$.
For checking $G[\pi]\in \Psiinfty$, we use the following relation.
\begin{align}
G[\pi](\lambda)\; \exp\left( - \frac{(G[\pi](\lambda))^2}{2}\right)
&=\sqrt{\beta(1/\lambda) + \ln \beta(1/\lambda)} \;
\frac{\exp\left(- \beta(1/\lambda)/2\right)}{\sqrt{\beta(1/\lambda)}}
\nonumber \\
&= \sqrt{1+\frac{\ln\beta(1/\lambda)}{\beta(1/\lambda)}} 
\lambdar \pi\left(\lambdar\right).
\label{eq:G-pi-exp-square}
\end{align}

\begin{lemma}
\label{lem:F-G}
$F[\psi]\in \Pizero$ for  $\psi\in \Psiinfty$.  $G[\pi]\in \Psiinfty$ for $\pi\in\Pizero$.
\end{lemma}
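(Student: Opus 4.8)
The plan is to verify the three clauses of Assumption~\ref{ass:1} for $F[\psi]$ and the three clauses of Assumption~\ref{ass:2} for $G[\pi]$, in both cases reducing everything to the change of variables $\lambda=1/\epsilon$ (with $d\epsilon/\epsilon=-d\lambda/\lambda$) together with three elementary facts: the map $x\mapsto xe^{-x^2/2}$ is decreasing on $(1,\infty)$ and tends to $0$; $x\mapsto x+\ln x$ is increasing for $x\ge 1$; and $0\le(\ln x)/x\le 1/e$ for $x\ge 1$, so that $1\le\sqrt{1+(\ln\beta)/\beta}\le\sqrt{1+1/e}<\sqrt{2}$ whenever $\beta\ge 1$. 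I will also use the two observations already recorded in the text: $\lim_{\lambda\uparrow\infty}\psi(\lambda)=\infty$ for $\psi\in\Psiinfty$, and $\lim_{\epsilon\downarrow 0}\epsilon\pi(\epsilon)=0$ for $\pi\in\Pizero$; together with Assumption~\ref{ass:1}(3) the latter gives that $\frac1\lambda\pi(\frac1\lambda)$ decreases to $0$ as $\lambda\uparrow\infty$, so that $\beta(1/\lambda)=-2\ln(\frac1\lambda\pi(\frac1\lambda))$ for all sufficiently large $\lambda$.

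For $F[\psi]\in\Pizero$: nonnegativity of \eqref{eq:def-F} is immediate from positivity of $\psi$. For integrability near the origin, the substitution $\lambda=1/\epsilon$ turns $\int_0^{\epsilon_0}F[\psi](\epsilon)\,d\epsilon$ into $\int_{1/\epsilon_0}^\infty\psi(\lambda)e^{-\psi(\lambda)^2/2}\,d\lambda/\lambda$, which is finite by the integral test \eqref{eq:upper-class} in Assumption~\ref{ass:2}(2); since $F[\psi]$ is bounded away from the origin (one extends it by a constant on $[1/M_\psi,1]$ should $M_\psi>1$, which does not affect the local conditions), it is integrable on $[0,1]$, giving clause~(1). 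For clause~(2), with $\lambda=1/\epsilon$ we have $F[\psi](\epsilon)=\lambda\psi(\lambda)e^{-\psi(\lambda)^2/2}>\delta_\psi$ by Assumption~\ref{ass:2}(3), so $\delta_\pi:=\delta_\psi$ works. For clause~(3), $\epsilon F[\psi](\epsilon)=\psi(1/\epsilon)e^{-\psi(1/\epsilon)^2/2}$ is the composition of the increasing function $\psi$ with the decreasing map $x\mapsto xe^{-x^2/2}$ on $(1,\infty)$; since $\psi(\lambda)>1$ for $\lambda$ large, this is decreasing in $\lambda=1/\epsilon$, hence $\epsilon F[\psi](\epsilon)$ is increasing in $\epsilon$ on $(0,\epsilon_\pi)$ once $\epsilon_\pi$ is small enough.

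For $G[\pi]\in\Psiinfty$: since $\beta\ge 1$ we have $\beta+\ln\beta\ge 1$, so \eqref{eq:def-G} is well defined and positive; for $\lambda$ large, $\beta(1/\lambda)=-2\ln(\frac1\lambda\pi(\frac1\lambda))$ is increasing in $\lambda$, and composing with the increasing maps $t\mapsto t+\ln t$ and $\sqrt{\,\cdot\,}$ shows $G[\pi]$ is increasing, which is clause~(1). For clauses~(2) and~(3) I substitute the identity \eqref{eq:G-pi-exp-square}. The integral test becomes $\int_{M_\psi}^\infty G[\pi](\lambda)e^{-G[\pi](\lambda)^2/2}\,\frac{d\lambda}{\lambda}=\int_{M_\psi}^\infty\sqrt{1+\frac{\ln\beta(1/\lambda)}{\beta(1/\lambda)}}\;\frac{1}{\lambda}\pi(1/\lambda)\,\frac{d\lambda}{\lambda}\le\sqrt{2}\int_{M_\psi}^\infty\frac{\pi(1/\lambda)}{\lambda^{2}}\,d\lambda=\sqrt{2}\int_{0}^{1/M_\psi}\pi(\epsilon)\,d\epsilon<\infty$ by Assumption~\ref{ass:1}(1), which is clause~(2). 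For clause~(3), $\lambda\,G[\pi](\lambda)e^{-G[\pi](\lambda)^2/2}=\sqrt{1+\frac{\ln\beta(1/\lambda)}{\beta(1/\lambda)}}\;\pi(1/\lambda)\ge\pi(1/\lambda)\ge\delta_\pi$ as soon as $1/\lambda<\epsilon_\pi$, so $\delta_\psi:=\delta_\pi$ works.

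I do not expect a genuine obstacle here: everything follows from \eqref{eq:G-pi-exp-square}, the substitution $\lambda=1/\epsilon$, and the elementary monotonicity facts above. The only care needed is bookkeeping of thresholds — one must take $\epsilon_\pi$ (for $F[\psi]$) small enough, and the constant $M_\psi$ attached to $G[\pi]$ large enough, that simultaneously the $\max$ defining $\beta$ is resolved by its first argument, $\psi>1$ on the relevant range, and the one-sided bounds $\pi\ge\delta_\pi$ respectively $\lambda\psi(\lambda)e^{-\psi(\lambda)^2/2}>\delta_\psi$ are available — and to keep track of the Jacobian $d\epsilon/\epsilon=-d\lambda/\lambda$ in the two changes of variables.
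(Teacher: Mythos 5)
Your proof is correct and follows essentially the same route as the paper's: integrability of $F[\psi]$ via the substitution $\lambda=1/\epsilon$ and the integral test, monotonicity of $\epsilon F[\psi](\epsilon)$ from the eventual decrease of $x\mapsto xe^{-x^2/2}$, and clauses (2)--(3) for $G[\pi]$ read off from the identity \eqref{eq:G-pi-exp-square}. The only cosmetic difference is that you replace the paper's $(1+o(1))$ asymptotics for the factor $\sqrt{1+\ln\beta/\beta}$ by the explicit uniform bound $\sqrt{1+1/e}<\sqrt{2}$, which is fine.
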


\begin{proof}
For the first part, the monotonicity of $\epsilon F[\psi](\epsilon) \in \Pizero$ for
sufficiently small $\epsilon$ holds because  $\psi(\lambda) e^{ -\psi(\lambda)^2/2}$ is eventually 
decreasing.  The integrability of  $F[\psi]$ follows directly from the
change of variables $\epsilon=1/\lambda$.  Finally $F[\psi](\epsilon) > \delta_\psi$
for $\epsilon < 1/M_\psi$ by 
\eqref{eq:ass:2} and \eqref{eq:def-F}.

For the second part, the monotonicity of $G[\pi]$ is obvious.
We note the fact that $\lim_{\epsilon\downarrow 0}\epsilon\pi(\epsilon) =0$,
because otherwise $\pi(\epsilon)$ is not integrable around the origin.
Hence 
\begin{equation}
\label{eq:beta-zero}
\beta(\epsilon)\uparrow\infty \ \ \text{as} \ \ \epsilon\downarrow 0.
\end{equation}
Then $G[\pi](\lambda) = \sqrt{\beta(1/\lambda)}(1+o(1))$ as $\lambda\rightarrow\infty$.
Then as $\lambda\rightarrow\infty$, by \eqref{eq:G-pi-exp-square},
\begin{equation}
\label{eq:G-pi-integrable}
G[\pi](\lambda)\; \exp\left( - \frac{(G[\pi](\lambda))^2}{2}\right) \frac{1}{\lambda}=
 \sqrt{1+\frac{\ln\beta(1/\lambda)}{\beta(1/\lambda)}} 
\frac{1}{\lambda^2} \pi\left(\lambdar\right)
=
\frac{1}{\lambda^2} \pi\left(\lambdar\right)
(1+o(1)).
\end{equation}
By the change of variables $\epsilon=1/\lambda$, the integrability of the left-hand 
side of \eqref{eq:G-pi-integrable} reduces to the integrability of 
$\pi(\epsilon)$ around the origin.
Finally 
\[
\lambda G[\pi](\lambda)\; \exp\left( - \frac{(G[\pi](\lambda))^2}{2}\right) 
= \pi\left(\lambdar\right)
(1+o(1)),
\]
which is bounded away from zero for large $\lambda$.
\end{proof}

Based on this lemma, we can consider the compositions $F\circ G: \Pizero \rightarrow \Pizero$ and
$G\circ F: \Psiinfty \rightarrow\Psiinfty$.  By direct computation we obtain
\begin{align}
F[G[\pi]](\epsilon) 
&= \pi(\epsilon) \sqrt{\frac{\beta(\epsilon) + \ln \beta(\epsilon)}{\beta(\epsilon)}}, \qquad
\beta(\epsilon)=-2 \ln(\epsilon\pi(\epsilon)),
\label{eq:FG-composition} \\
G[F[\psi]](\lambda) &= \sqrt{\psi(\lambda)^2 - 2\ln \psi(\lambda) + \ln\left(\psi(\lambda)^2 - 2\ln \psi(\lambda)\right)}.
\label{eq:GF-composition}
\end{align}

\subsection{Validity of EFKP-LIL via Bayesian strategy}
\label{subsec:efkp-validity}
Here
we establish the validity of EFKP-LIL via a Bayesian strategy.

\begin{theorem}
\label{thm:efkp-general-pi}
In OUFG, by a prior $\pi \in \Pizero$, Skeptic weakly forces
\[
\ss_\infty = \infty \ \ \Rightarrow \  \ 
S_n - \ss_n G[\pi](\ss_n^2) \le 0 \quad {a.a.}
\]
\end{theorem}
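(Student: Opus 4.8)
The Bayesian strategy with the bare prior $\pi$ is too weak: on the event $\{S_n>\ss_n G[\pi](\ss_n^2)\text{ i.o.}\}$ its capital stays bounded rather than divergent. So the plan is to bet instead with an \emph{enlarged} prior $\hat\pi=c\cdot\pi$, where $c$ is a decreasing correction factor that grows to infinity slowly enough to keep $\hat\pi$ integrable but fast enough to push the capital to infinity on the bad event. Since $\pi$ is integrable on $[0,1]$, I would take $c(\epsilon)=\max\!\big(F(\epsilon)^{-1/2},1\big)$ with $F(\epsilon)=\int_0^\epsilon\pi$: this $c$ is decreasing, $c(\epsilon)\uparrow\infty$ as $\epsilon\downarrow 0$, $c\ge 1$, and $\hat\pi$ is still a finite prior density because $\int_0^{\epsilon_0}c\,\pi=2\sqrt{F(\epsilon_0)}<\infty$ for small $\epsilon_0$. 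Although $\epsilon\hat\pi(\epsilon)$ need not be increasing, $c$ is decreasing, so Remark \ref{rem:upper-limit} applies to the Bayesian strategy with prior $\hat\pi$ and yields
\[
\cps_n^{\hat\pi}\ge c(u_n)\,\frac{1}{6e}\,\frac{1}{\ss_n}\,\pi\!\Big(\frac{S_n}{\ss_n^2}\Big)\exp\!\Big(\frac{S_n^2}{2\ss_n^2}\Big),\qquad
u_n=\frac{1+\ss_n/S_n}{1+\ss_n^2/S_n^2}\,\frac{S_n}{\ss_n^2},
\]
whenever $S_n>0$, $S_n^2/\ss_n^2>\max(2,1/\epsilon_\pi)$, and $S_n^3/\ss_n^4<1/2$.

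Since $\hat\pi\ge\pi\ge\delta_\pi$ near the origin, the argument opening Section \ref{subsec:another-bound} shows that the Bayesian strategy with prior $\hat\pi$ by itself already weakly forces $\limsup_n S_n/\sqrt{\ss_n^2\ln\ss_n^2}\le 1$; so it is enough to prove $\limsup_n\cps_n^{\hat\pi}=\infty$ on any path along which $\ss_\infty=\infty$, $S_n\le 2\sqrt{\ss_n^2\ln\ss_n^2}$ for all large $n$, and $S_n>\ss_n G[\pi](\ss_n^2)$ for infinitely many $n$. On such a path $S_n^3/\ss_n^4\to 0$ and $S_n/\ss_n^2\to 0$, so $u_n\to 0$ and hence $c(u_n)\to\infty$; and at the infinitely many $n$ with $S_n>\ss_n G[\pi](\ss_n^2)$ one has $S_n^2/\ss_n^2=r_n^2$ with $r_n:=S_n/\ss_n>G[\pi](\ss_n^2)\to\infty$, so the three hypotheses above hold for $n$ large. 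Writing $\beta$ for the function in \eqref{eq:def-G} (so $\beta(\epsilon)=-2\ln(\epsilon\pi(\epsilon))$ for small $\epsilon$, and $G[\pi](\lambda)^2=\beta(1/\lambda)+\ln\beta(1/\lambda)$), the displayed bound reads $\cps_n^{\hat\pi}\ge c(u_n)\,\frac{1}{6e}\,h(r_n)$ with $h(r)=r^{-1}\exp\!\big((r^2-\beta(r/\ss_n))/2\big)$.

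The core of the proof is to see that $h(r_n)$ stays bounded below by a positive constant. I would use that $r\mapsto r^{-1}e^{r^2/2}$ is increasing on $[1,\infty)$ and $r\mapsto e^{-\beta(r/\ss_n)/2}$ is increasing (as $\beta$ is decreasing on $(0,\epsilon_\pi)$), so $h$ is increasing on $[1,\epsilon_\pi\ss_n)$ and therefore $h(r_n)\ge h(G[\pi](\ss_n^2))$; then, substituting $G[\pi](\ss_n^2)^2=\beta(1/\ss_n^2)+\ln\beta(1/\ss_n^2)$ and using $\beta(1/\ss_n^2)\ge\beta(G[\pi](\ss_n^2)/\ss_n)$ (monotonicity of $\beta$, valid since $\ss_n G[\pi](\ss_n^2)\ge 1$),
\[
h\big(G[\pi](\ss_n^2)\big)\ge\frac{\sqrt{\beta(1/\ss_n^2)}}{G[\pi](\ss_n^2)}=\Big(1+\frac{\ln\beta(1/\ss_n^2)}{\beta(1/\ss_n^2)}\Big)^{-1/2}\longrightarrow 1,
\]
the limit being $1$ because $\beta(1/\ss_n^2)\uparrow\infty$ by \eqref{eq:beta-zero}. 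Hence $h(r_n)\ge 1/2$ for $n$ large, so $\cps_n^{\hat\pi}\ge c(u_n)/(12e)\to\infty$ along the infinitely many $n$ in question, i.e.\ $\limsup_n\cps_n^{\hat\pi}=\infty$, which is what is needed. The step I expect to require the most care is the bookkeeping around the enlarged prior: confirming $c$ can be chosen both divergent and mild enough that $\hat\pi$ remains integrable for \emph{every} $\pi\in\Pizero$, and checking that the small quantities $S_n/\ss_n^2,\ 1/\ss_n^2,\ G[\pi](\ss_n^2)/\ss_n,\ u_n$ all eventually lie in $(0,\epsilon_\pi)$ and that $r_n^2$ eventually exceeds the thresholds of Theorem \ref{thm:basic-inequality-2}.
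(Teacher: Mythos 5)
Your proof is correct and follows essentially the same route as the paper's: inflate the prior to $c\pi$ with a decreasing divergent correction $c$, invoke the bound of Remark \ref{rem:upper-limit} to pick up the factor $c(u_n)$, and show the remaining factor is bounded below near $1$ when $S_n \ge \ss_n G[\pi](\ss_n^2)$, using the monotonicity of $x^{-1/2}e^{x/2}$ and of $\epsilon\pi(\epsilon)$ together with the identity $G[\pi](\lambda)^2=\beta(1/\lambda)+\ln\beta(1/\lambda)$. The only (cosmetic) differences are your explicit choice $c=\max(F^{-1/2},1)$ in place of the paper's dyadic step function and your packaging of the two monotonicity steps into the single increasing function $h$.
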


\begin{proof}
Given $\pi\in \Pizero$, there exists a decreasing function $c(\epsilon)$, such that 
$\lim_{\epsilon\downarrow 0}c(\epsilon)=\infty$ and
$\int_0^1 c(\epsilon) \pi(\epsilon) < \infty$.  Indeed we can define
\[
c(\epsilon)=k \ \ \text{for} \ \ \epsilon\in [\epsilon_{k+1},\epsilon_k),  \ k=1,2,\dots,
\]
where $\epsilon_k$ is defined by 
\[
\int_0^{\epsilon_k} \pi(\epsilon) d\epsilon =  \frac{1}{2^k}\int_0^1 \pi(\epsilon) d\epsilon.
\]
We use the Bayesian strategy with the prior $\tilde \pi(\epsilon)=c(\epsilon)\pi(\epsilon)$.
Write $\psi(\lambda)=G[\pi](\lambda)$.
For sufficiently large $n$ such that
$S_n \ge \max(\ss_n \psi(\ss_n^2),1)$ and 
$S_n /\ss_n^2$ is sufficiently small,  by \eqref{eq:bound-for-general-efkp} in Remark \ref{rem:upper-limit}, we have
\begin{align*}
\cps_n^{\pi} 
&\ge c(u_n)\frac{1}{\ss_n} \pi\left(  \frac{S_n}{\ss_n^2} \right) 
 \exp\left(\frac{S_n^2}{2\ss_n^2}\right)  \\
&=
c(u_n)\frac{S_n}{\ss_n^2} \pi\left(  \frac{S_n}{\ss_n^2} \right) 
\left( \frac{S_n^2}{\ss_n^2}\right)^{-1/2} \exp\left(\frac{S_n^2}{2\ss_n^2}\right) \\
&\ge  c(u_n) \frac{S_n}{\ss_n^2} \pi\left( \frac{S_n}{\ss_n^2} \right) 
\exp\left( \frac{\psi_n(\ss_n^2)^2}{2}  - \ln \psi(\ss_n^2)\right) \\
& \ge  c(u_n) \frac{1}{\ss_n^2} \pi\left(\frac{1}{\ss_n^2}\right) 
\exp\left( \frac{\psi(\ss_n^2)^2}{2} - \ln \psi(\ss_n^2)\right) .
\end{align*}
Now 
\[
\exp\left( \frac{\psi(\ss_n^2)^2}{2}\right)
= \exp\left( \frac{1}{2} \beta\left(\frac{1}{\ss_n^2}\right)
+ \frac{1}{2}\ln \beta\left(\frac{1}{\ss_n^2}\right)\right) 
=\left(\frac{1}{\ss_n^2} \pi\left(\frac{1}{\ss_n^2}\right) \right)^{-1} 
\exp\left(  \frac{1}{2}\ln \beta\left(\frac{1}{\ss_n^2}\right)\right) .
\]
Hence
\[
\cps_n^{\pi} \ge c(u_n) \exp\left( \frac{1}{2}\ln \beta\left(\frac{1}{\ss_n^2}\right) -\ln \psi(\ss_n^2) \right).
\]
But by \eqref{eq:def-G},
\[
\ln \psi(\ss_n^2)  = \frac{1}{2}\ln \beta\left( \frac{1}{\ss_n^2}\right)  +o(1)
\]
and we have
\[
\cps_n^{\pi} \ge c(u_n) \exp(o(1)) \ \uparrow \infty  \qquad (n\rightarrow\infty),
\]
because $u_n \rightarrow 0$ as $n\rightarrow\infty$.
\end{proof}

As a corollary to this theorem, we state the following statement of the validity of EFKP-LIL.
\begin{corollary}
Let $\psi$ be a positive increasing function defined for $\lambda>M_{\psi}>0$.
Then, $\psi$ belongs to the upper class in OUFG
if and only if 
\[I(\psi)=\int_{M_{\psi}}^\infty \psi(\lambda)e^{-\psi(\lambda)^2/2}\frac{d\lambda}{\lambda}<\infty.\]
\end{corollary}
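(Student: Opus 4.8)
The plan is to read off the ``if'' direction directly from Theorem~\ref{thm:efkp-general-pi} together with the composition formula \eqref{eq:GF-composition}, and to handle the ``only if'' direction — the lower-class statement — by a separate argument, since it concerns Reality's power and is not a formal consequence of the Skeptic-forcing results in this paper.

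For the ``if'' direction, suppose $\psi$ is positive, increasing on $(M_\psi,\infty)$, and $I(\psi)<\infty$. First assume in addition that $\psi\in\Psiinfty$. I would set $\pi=F[\psi]$; by Lemma~\ref{lem:F-G}, $\pi\in\Pizero$, so Theorem~\ref{thm:efkp-general-pi} gives that the Bayesian strategy with $\pi$ weakly forces $\ss_\infty=\infty\Rightarrow S_n-\ss_n G[F[\psi]](\ss_n^2)\le 0$ a.a. By \eqref{eq:GF-composition}, $G[F[\psi]](\lambda)=\sqrt{\psi(\lambda)^2-2\ln\psi(\lambda)+\ln(\psi(\lambda)^2-2\ln\psi(\lambda))}$; since $\psi(\lambda)\uparrow\infty$ we have $\ln\psi(\lambda)\ge 0$ for large $\lambda$, hence $\ln(\psi(\lambda)^2-2\ln\psi(\lambda))\le 2\ln\psi(\lambda)$ and therefore $G[F[\psi]](\lambda)\le\psi(\lambda)$ eventually. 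Because $\ss_n\ge 0$, this yields $S_n-\ss_n\psi(\ss_n^2)\le S_n-\ss_n G[F[\psi]](\ss_n^2)\le 0$ a.a., i.e.\ $\psi$ belongs to the upper class. If $\psi\notin\Psiinfty$, observe that the hypotheses already give clauses (1) and (2) of Assumption~\ref{ass:2}, so the only clause that can fail is \eqref{eq:ass:2}; I would then replace $\psi$ by $\tilde\psi=\min(\psi,\psi_*)$, where $\psi_*\in\Psiinfty$ is a fixed reference function such as \eqref{eq:bound-for-EFKP-LIL}. The minimum of two increasing functions is increasing, and since $xe^{-x^2/2}$ is decreasing for $x>1$ and $\tilde\psi\le\psi,\psi_*$, one gets $\tilde\psi e^{-\tilde\psi^2/2}\le \psi e^{-\psi^2/2}+\psi_* e^{-\psi_*^2/2}$ eventually, so $I(\tilde\psi)<\infty$, while $\lambda\tilde\psi(\lambda)e^{-\tilde\psi(\lambda)^2/2}\ge \lambda\psi_*(\lambda)e^{-\psi_*(\lambda)^2/2}>\delta_{\psi_*}$; hence $\tilde\psi\in\Psiinfty$, and applying the previous case to $\tilde\psi$ gives $G[F[\tilde\psi]]\le\tilde\psi\le\psi$ eventually, so $\psi$ is again in the upper class.

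For the ``only if'' direction I would argue the contrapositive: if $I(\psi)=\infty$ then $\psi$ lies in the lower class, i.e.\ no Skeptic strategy forces the event. This is the step I expect to be the main obstacle, since it requires producing, for an \emph{arbitrary} Skeptic strategy, a Reality play that keeps $\cps_n$ bounded (respecting Reality's collateral duty), has $\ss_\infty=\infty$, and satisfies $S_n>\ss_n\psi(\ss_n^2)$ infinitely often — a statement about Reality that the Skeptic-side results of the paper do not deliver. The route I would take is a blocking construction: partition the rounds into consecutive blocks, and on each block let Reality watch Skeptic's betting proportion $\epsilon_n\in[0,1]$ and combine large positive moves (cheap in capital when $\epsilon_n$ is small) with moves $x_n=-1$ (which shrink $\cps_n$ when $\epsilon_n$ is large), so as to push $S$ up by about $\ss_n\psi(\ss_n^2)$ across the block while the capital multiplier over the block stays bounded by a constant; the divergence of $I(\psi)$ is exactly what lets infinitely many such blocks be fitted in without forcing $\cps_n\to\infty$. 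An alternative is to deduce this half from the classical lower-class part of the EFKP law for random walks (as in \cite{Revesz2013Random}) by having Reality simulate a random walk. In either route the delicate point — and the crux of the whole direction — is the simultaneous control of $\cps_n$ and of the excursion size.
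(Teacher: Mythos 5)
Your ``if'' direction is correct and coincides with the paper's own argument: for $\psi\in\Psiinfty$ take $\pi=F[\psi]$, apply Theorem~\ref{thm:efkp-general-pi}, and use $G[F[\psi]]\le\psi$ from \eqref{eq:GF-composition}; for general $\psi$ with $I(\psi)<\infty$ truncate by $\tilde\psi=\min(\psi,\psi_*)$ with $\psi_*$ a reference function of the form \eqref{eq:bound-for-EFKP-LIL}, check Assumption~\ref{ass:2} for $\tilde\psi$ exactly as you do, and use that the upper class is upward closed. No complaints there.

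The ``only if'' direction, however, is left as a plan rather than a proof, and this is a genuine gap. Your primary route (a block construction in which Reality watches $\epsilon_n$ and mixes large positive moves with $x_n=-1$) is not carried out: the crux you yourself identify --- simultaneously keeping $\cps_n$ bounded, getting $\ss_\infty=\infty$, and producing excursions $S_n>\ss_n\psi(\ss_n^2)$ infinitely often against an \emph{arbitrary} Skeptic strategy --- is precisely the content of the sharpness half of EFKP-LIL, and sketching the shape of the blocks does not establish it. The paper does not attempt this construction from scratch. Instead it restricts Reality to the fair-coin moves $x_n\in\{-1,+1\}$ (so $\ss_n^2=n$), invokes the known sharpness result that when $I(\psi)=\infty$ Skeptic can \emph{force} $S_n-\sqrt{n}\,\psi(n)\ge0$ infinitely often in the fair-coin game (classical EFKP-LIL, or the game-theoretic version in \cite{sasai2015efkp-sn}), and then applies the forcing-implies-compliance principle of \cite{MiyabeTakemura2013Law,MiyabeTakemura2015SPA}: if Skeptic can force an event, Reality has a strategy that realizes that event while keeping any given Skeptic strategy's capital bounded. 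Since the fair-coin moves are legal in OUFG, this Reality strategy defeats every Skeptic strategy in OUFG, so $\psi$ is not in the upper class. Your second ``alternative'' (Reality simulates a random walk) is morally this argument, but to make it rigorous you must cite both the sharpness theorem and the compliance theorem explicitly --- a bare appeal to the measure-theoretic lower-class result does not by itself produce a Reality strategy that respects her collateral duty against every Skeptic. As written, the backward implication is unproven.
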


\begin{proof}
First suppose that $\psi\in\Psiinfty$.
Let $\pi=F[\psi]$ and apply Theorem 
\ref{thm:efkp-general-pi}.  Then Skeptic can weakly force
\[
\ss_\infty = \infty \ \ \Rightarrow \  \ 
S_n - \ss_n G[F[\psi]](\ss_n^2) \le 0 \quad {a.a.}
\]
But in \eqref{eq:GF-composition} we have
\[
G[F[\psi]](\lambda) < \psi(\lambda)
\]
for all sufficiently large $\lambda$.
Hence $S_n - \ss_n\psi(\ss_n^2) \le 0 \quad {a.a.}$

Next let $\psi$ be a positive increasing function such that $I(\psi)<\infty$.
Let $\widetilde{\psi}=\min\{\psi,\sqrt{2\ln_2 n+4\ln_3 n}\}$.
Notice that $\sqrt{2\ln_2 n+4\ln_3 n}$ belongs to the upper class.
Since $xe^{-x^2/2}$ is decreasing for $x>1$ and $\widetilde{\psi}\le\sqrt{2\ln_2 n+4\ln_3 n}$,
we have
\[\lambda\widetilde{\psi}(\lambda)e^{-\widetilde{\psi}(\lambda)^2/2}
\ge\lambda\frac{\sqrt{2\ln_2 n+4\ln_3 n}}{(\ln n)(\ln_2 n)^2}.\]
Then, $\widetilde{\psi}$ satisfies Assumption \ref{ass:2}.
Hence, $\widetilde{\psi}$ belongs to the upper class and so does $\psi$.

Finally, suppose that $\psi$ is a positive increasing function,
but $I(\psi)=\infty$.
Then, in the fair-coin tossing game,
Skeptic can force $S_n-\sqrt{n}\psi(n)\ge0$ for infinitely many $n$.
This is a corollary from the classical EFKP-LIL.
This fact also follows from the main theorem in \cite{sasai2015efkp-sn}.
Thus, Reality can comply with this event with this restriction,
which means that $\psi$ does not belong to the upper class in OUFG.
\end{proof}

\subsection{Equivalence}

Consider $\psi\in\Psiinfty$ such that the convergence rate of $I(\psi)$ is very slow.
Since $G[F[\psi]]\in\Psiinfty$ and $G[F[\psi]]<\psi$,
the two functions $\psi$ and $G[F[\psi]]$ should have similar growing rates.
One should also notice that, by \eqref{eq:beta-zero},
\begin{equation}
\label{eq:asymptotic-inverse}
\lim_{\epsilon\downarrow 0} \frac{F[G[\pi]](\epsilon)}{\pi(\epsilon)}=1.
\end{equation}
In this asymptotic sense, $F$ and $G$ are inverse functionals.  
We clarify this point further below in Theorem \ref{thm:asymptotic-equivalence}.

In order to identify functions with the same rate of growth,
we introduce equivalence relations in $\Pizero$ and $\Psiinfty$.
For two functions $\pi_1, \pi_2 \in\Pizero$ we write  $\pi_1 \simzero \pi_2$ if
\[
0 < \liminf_{\epsilon\downarrow 0} \frac{\pi_2(\epsilon)}{\pi_1(\epsilon)} \le 
\limsup_{\epsilon\downarrow 0} \frac{\pi_2(\epsilon)}{\pi_1(\epsilon)} < \infty.
\]
It is easy to check that $\simzero$ is an equivalence relation.

For two functions $\psi_1, \psi_2 \in\Psiinfty$ we write  $\psi_1 \siminfty \psi_2$ if
\[
0 < \liminf_{\lambda\uparrow \infty} \frac{\exp(\psi_2(\lambda)^2/2)}{\exp(\psi_1(\lambda)^2/2)}
\le 
\limsup_{\lambda\uparrow \infty}  \frac{\exp(\psi_2(\lambda)^2/2)}{\exp(\psi_1(\lambda)^2/2)}
< \infty.
\]
Again it is easy to check that $\siminfty$ is an equivalence relation.

In the following two lemmas we prove that the functionals $F$ and $G$ preserve the equivalence relations.

\begin{lemma}
\label{lem:F-preserve}
Let $\psi_1, \psi_2 \in \Psiinfty$. If $\psi_1 \siminfty \psi_2$, then
$F[\psi_1] \simzero F[\psi_2]$.
\end{lemma}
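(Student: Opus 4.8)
The plan is to unwind the definition of $F$ and reduce the statement $F[\psi_1] \simzero F[\psi_2]$ to a comparison of $\psi_1$ and $\psi_2$ that is controlled by the hypothesis $\psi_1 \siminfty \psi_2$. From \eqref{eq:def-F} we have, for $\epsilon=1/\lambda$,
\[
\frac{F[\psi_2](\epsilon)}{F[\psi_1](\epsilon)}
= \exp\!\left( \frac{\psi_1(\lambda)^2}{2} - \frac{\psi_2(\lambda)^2}{2}\right)
= \frac{\exp(\psi_1(\lambda)^2/2)}{\exp(\psi_2(\lambda)^2/2)},
\]
since the factor $1/\epsilon=\lambda$ cancels. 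Thus the ratio $F[\psi_2]/F[\psi_1]$ evaluated at $\epsilon=1/\lambda$ is exactly the reciprocal of the ratio appearing in the definition of $\siminfty$. As $\epsilon\downarrow 0$ corresponds to $\lambda\uparrow\infty$, taking $\liminf$ and $\limsup$ immediately gives
\[
\limsup_{\epsilon\downarrow 0} \frac{F[\psi_2](\epsilon)}{F[\psi_1](\epsilon)}
= \left( \liminf_{\lambda\uparrow\infty} \frac{\exp(\psi_2(\lambda)^2/2)}{\exp(\psi_1(\lambda)^2/2)}\right)^{-1} < \infty,
\]
and symmetrically
\[
\liminf_{\epsilon\downarrow 0} \frac{F[\psi_2](\epsilon)}{F[\psi_1](\epsilon)}
= \left( \limsup_{\lambda\uparrow\infty} \frac{\exp(\psi_2(\lambda)^2/2)}{\exp(\psi_1(\lambda)^2/2)}\right)^{-1} > 0,
\]
which is precisely $F[\psi_1] \simzero F[\psi_2]$.

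The only subtlety to address is bookkeeping: one should note that $F[\psi_1], F[\psi_2]\in\Pizero$ by Lemma \ref{lem:F-G}, so both equivalence relations are well-defined on the relevant functions; and one should observe that the change of variables $\epsilon=1/\lambda$ is a monotone bijection between a punctured neighborhood of $0$ and a neighborhood of $+\infty$, so that $\liminf$ and $\limsup$ transform as claimed (with the reciprocal swapping them). There is essentially no obstacle here — the lemma is a one-line computation once the cancellation of the $1/\epsilon$ factor is noticed, and the work is entirely in writing down the algebra cleanly. I expect the companion lemma (that $G$ preserves the equivalence, using \eqref{eq:G-pi-exp-square} together with the fact that $\ln\beta/\beta\to 0$) to be slightly more involved, but Lemma \ref{lem:F-preserve} itself is immediate.
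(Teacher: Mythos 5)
There is a genuine slip in your central computation. From \eqref{eq:def-F},
\[
\frac{F[\psi_2](\epsilon)}{F[\psi_1](\epsilon)}
= \frac{\psi_2(1/\epsilon)}{\psi_1(1/\epsilon)}\,
\exp\!\left( \frac{\psi_1(1/\epsilon)^2 - \psi_2(1/\epsilon)^2}{2}\right),
\]
not just the exponential factor: the $1/\epsilon$ factors cancel, but the prefactors $\psi_1(1/\epsilon)$ and $\psi_2(1/\epsilon)$ do not, since they involve two different functions. Consequently your exact identities for the $\limsup$ and $\liminf$ (as reciprocals of each other) are false as stated, and the proof as written does not go through.

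The gap is fixable, and the paper's proof shows how: taking logarithms of the two-sided bound $\kappa_1 \exp(\psi_1(\lambda)^2/2) \le \exp(\psi_2(\lambda)^2/2) \le \kappa_2 \exp(\psi_1(\lambda)^2/2)$ gives $\psi_1(\lambda)^2 + 2\ln\kappa_1 \le \psi_2(\lambda)^2 \le \psi_1(\lambda)^2 + 2\ln\kappa_2$, and since $\psi_1(\lambda),\psi_2(\lambda)\to\infty$ for functions in $\Psiinfty$ (a fact noted after Assumption \ref{ass:2}), this forces $\psi_2(\lambda)/\psi_1(\lambda)\to 1$; in particular the missing prefactor lies in $[1/2,2]$ for all sufficiently large $\lambda$. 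Combining this with the exponential bounds yields $F[\psi_1]\simzero F[\psi_2]$, with the conclusion weakened from your claimed exact limit identities to the two-sided boundedness that $\simzero$ actually requires. You should add this step: the argument is not a pure cancellation, and the divergence of $\psi_1,\psi_2$ is genuinely used.
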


\begin{proof}
Since $\psi_1 \siminfty \psi_2$, there exist positive $\kappa_1, \kappa_2, \lambda_1$ such that
\[
\kappa_1 \exp(\psi_1(\lambda)^2/2) \le 
\exp(\psi_2(\lambda)^2/2) \le 
\kappa_2 \exp(\psi_1(\lambda)^2/2), \qquad
\forall \lambda> \lambda_1.
\]
Then 
\[
\sqrt{2 \ln \kappa_1 + \psi_1(\lambda)} \le \psi_2(\lambda) \le \sqrt{2 \ln \kappa_2 + \psi_1(\lambda)}.
\]
Since $\psi_1(\lambda)$ and $\psi_2(\lambda)$ diverge to $\infty$ as $\lambda \rightarrow\infty$,
for sufficiently small $\epsilon$  we have
\[
\frac{\kappa_1}{2}\psi_1\left(\epsilonr\right) \exp\left(\psi_1\left(\epsilonr\right)^2/2\right)
\le \psi_2\left(\epsilonr\right) \exp\left(\psi_2\left(\epsilonr\right)^2/2\right)
\le 2 \kappa_2\psi_1\left(\epsilonr\right) \exp\left(\psi_1\left(\epsilonr\right)^2/2\right).
\]
Dividing this by $\epsilon$ we obtain the lemma.
\end{proof}

\begin{lemma}
\label{lem:G-preserve}
Let $\pi_1, \pi_2 \in \Pizero$. If $\pi_1 \simzero \pi_2$, then
$G[\pi_1] \siminfty G[\pi_2]$.
\end{lemma}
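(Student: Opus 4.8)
The plan is to mimic the structure of the proof of Lemma \ref{lem:F-preserve}, but working through the definition \eqref{eq:def-G} of $G$ instead of $F$. Suppose $\pi_1 \simzero \pi_2$, so there exist positive constants $\kappa_1, \kappa_2, \epsilon_1$ with
\[
\kappa_1 \pi_1(\epsilon) \le \pi_2(\epsilon) \le \kappa_2 \pi_1(\epsilon), \qquad \forall\, 0 < \epsilon < \epsilon_1.
\]
Taking logarithms and using $\beta_i(\epsilon) = \max(-2\ln(\epsilon\pi_i(\epsilon)),1)$, this translates (for $\epsilon$ small enough that both $\beta_i$ have dropped the $\max$ with $1$, which is legitimate by \eqref{eq:beta-zero}) into an additive two-sided bound
\[
\beta_1(\epsilon) - 2\ln\kappa_2 \le \beta_2(\epsilon) \le \beta_1(\epsilon) - 2\ln\kappa_1.
\]
So $\beta_1$ and $\beta_2$ differ by a bounded additive amount near the origin, and in particular $\beta_2(\epsilon)/\beta_1(\epsilon)\to 1$ as $\epsilon\downarrow 0$ since $\beta_i(\epsilon)\uparrow\infty$.

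Next I would compute $\exp(G[\pi_i](\lambda)^2/2)$ directly. From \eqref{eq:def-G},
\[
\exp\!\big(G[\pi_i](\lambda)^2/2\big) = \exp\!\Big(\tfrac{1}{2}\beta_i(1/\lambda) + \tfrac{1}{2}\ln\beta_i(1/\lambda)\Big) = \sqrt{\beta_i(1/\lambda)}\,\exp\!\big(\beta_i(1/\lambda)/2\big).
\]
Therefore
\[
\frac{\exp(G[\pi_2](\lambda)^2/2)}{\exp(G[\pi_1](\lambda)^2/2)} = \sqrt{\frac{\beta_2(1/\lambda)}{\beta_1(1/\lambda)}}\,\exp\!\Big(\tfrac{1}{2}\big(\beta_2(1/\lambda) - \beta_1(1/\lambda)\big)\Big).
\]
Writing $\lambda\to\infty$, i.e. $\epsilon = 1/\lambda \downarrow 0$: the square-root factor tends to $1$ by the previous paragraph, and the exponential factor is trapped between $\exp(-\ln\kappa_2)=1/\kappa_2$ and $\exp(-\ln\kappa_1)=1/\kappa_1$ by the additive bound on $\beta_2-\beta_1$. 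Hence the ratio is bounded above and below by positive constants for all large $\lambda$, which is exactly $G[\pi_1]\siminfty G[\pi_2]$. (One should also note $G[\pi_1],G[\pi_2]\in\Psiinfty$ by Lemma \ref{lem:F-G}, so the relation $\siminfty$ is actually applicable.)

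The only delicate point — and the place I'd be most careful — is the passage from the multiplicative hypothesis on the $\pi_i$ to the additive statement about the $\beta_i$, because of the $\max(\cdot,1)$ truncation in the definition of $\beta$. This is harmless: since $\epsilon\pi_i(\epsilon)\to 0$ as $\epsilon\downarrow 0$, we have $-2\ln(\epsilon\pi_i(\epsilon))\to +\infty$, so for all sufficiently small $\epsilon$ the truncation is inactive and $\beta_i(\epsilon) = -2\ln(\epsilon\pi_i(\epsilon))$ exactly; all the asymptotic ($\epsilon\downarrow 0$, equivalently $\lambda\uparrow\infty$) statements above are unaffected. Everything else is a routine chain of inequalities with no real obstacle.
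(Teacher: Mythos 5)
Your proposal is correct and follows essentially the same route as the paper: both pass from the two-sided multiplicative bound on the $\pi_i$ to an additive bound on $\beta_2-\beta_1$, use the identity $\exp(G[\pi_i](\lambda)^2/2)=\sqrt{\beta_i(1/\lambda)}\,e^{\beta_i(1/\lambda)/2}=\sqrt{-2\ln(\epsilon\pi_i(\epsilon))}/(\epsilon\pi_i(\epsilon))$, and conclude the two-sided bound on the ratio of exponentials. The only cosmetic difference is that you observe $\sqrt{\beta_2/\beta_1}\to 1$ where the paper uses crude factor-of-$2$ bounds on the square roots; your remark about the $\max(\cdot,1)$ truncation being eventually inactive matches the paper's implicit use of \eqref{eq:beta-zero}.
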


\begin{proof}
There exist positive $\kappa_1, \kappa_2, \epsilon_1$ such that
\begin{equation}
\label{eq:G-preserve-1}
\kappa_1 \pi_1(\epsilon) < \pi_2(\epsilon) < \kappa_2 \pi_1(\epsilon), \qquad 
\forall \epsilon \in (0,\epsilon_1).
\end{equation}
Then 
\[
-2 \ln (\epsilon \pi_1(\epsilon)) - 2 \ln \kappa_2 
\le
-2 \ln (\epsilon \pi_2(\epsilon))
\le 
-2 \ln (\epsilon \pi_1(\epsilon)) - 2 \ln \kappa_1 .
\]
Since $-\ln(\epsilon \pi_i(\epsilon))$, $i=1,2$ diverge to $\infty$ as $\epsilon \downarrow 0$,
for sufficiently small $\epsilon$ we have
\begin{equation}
\label{eq:G-preserve-2}
\frac{1}{2}\sqrt{-2 \ln (\epsilon \pi_1(\epsilon))}
\le 
\sqrt{-2 \ln (\epsilon \pi_2(\epsilon))}
\le 
2\sqrt{-2 \ln (\epsilon \pi_1(\epsilon))}.
\end{equation}
By \eqref{eq:G-preserve-1} and \eqref{eq:G-preserve-2}
\[
\frac{1}{2\kappa_2} \frac{\sqrt{-2 \ln (\epsilon \pi_1(\epsilon))}}{\epsilon \pi_1(\epsilon)}
\le  \frac{\sqrt{-2 \ln (\epsilon \pi_2(\epsilon))}}{\epsilon \pi_2(\epsilon)}
\le 
\frac{2}{\kappa_1} \frac{\sqrt{-2 \ln (\epsilon \pi_1(\epsilon))}}{\epsilon \pi_1(\epsilon)}.
\]
Now
\[
\frac{\sqrt{-2 \ln (\epsilon \pi_i(\epsilon))}}{\epsilon \pi_i(\epsilon)}
= \exp\left( 
\frac{\left(\sqrt{-2 \ln(\epsilon \pi_i(\epsilon)) + \ln (-2 \ln (\epsilon \pi_i(\epsilon))) }\right)^2}
{2}\right) 
= \exp\left(\frac{G[\pi_i]\left(\epsilonr\right)^2}{2}\right), \qquad i=1,2.
\]
Therefore for sufficiently large $\lambda$ 
\[
\frac{1}{2\kappa_2} \exp\left(\frac{G[\pi_1]\left(\lambda\right)^2}{2}\right)
\le
\exp\left(\frac{G[\pi_2]\left(\lambda\right)^2}{2}\right)
\le
\frac{2}{\kappa_1} \exp\left(\frac{G[\pi_1]\left(\lambda\right)^2}{2}\right).
\]
\end{proof}

Let $\Pizero/\simzero$ denote the set of equivalence classes in $\Pizero$ with respect to $\simzero$ and 
define $\Psiinfty /\siminfty$ similarly. Based on the above two lemmas and
\eqref{eq:FG-composition}, \eqref{eq:GF-composition}
we have the following theorem.   
\begin{theorem}
\label{thm:asymptotic-equivalence}
The functionals $F$ and $G$ give bijections between $\Pizero/\simzero$ and  $\Psiinfty /\siminfty$.
Furthermore they are inverse functionals to each other.
\end{theorem}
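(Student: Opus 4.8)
The plan is to show that $F$ and $G$ descend to well-defined maps between the quotients $\Psiinfty/\siminfty$ and $\Pizero/\simzero$, and then that these descended maps are mutually inverse; since a pair of mutually inverse maps is automatically a pair of bijections, this yields both assertions of the theorem at once.

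First I would record that the descent is legitimate. By Lemma~\ref{lem:F-G}, $F$ is a genuine map $\Psiinfty\to\Pizero$ and $G$ a genuine map $\Pizero\to\Psiinfty$ (so that, for instance, $F[G[\pi]]\in\Pizero$ and $\simzero$ may be applied to it). By Lemma~\ref{lem:F-preserve}, $\psi_1\siminfty\psi_2$ forces $F[\psi_1]\simzero F[\psi_2]$, so $F$ induces a map $\bar F:\Psiinfty/\siminfty\to\Pizero/\simzero$; symmetrically, Lemma~\ref{lem:G-preserve} gives an induced map $\bar G:\Pizero/\simzero\to\Psiinfty/\siminfty$.

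Next I would compute the two compositions. For $\bar F\circ\bar G$: by the composition formula \eqref{eq:FG-composition} we have $F[G[\pi]](\epsilon)=\pi(\epsilon)\sqrt{1+\ln\beta(\epsilon)/\beta(\epsilon)}$ with $\beta(\epsilon)=-2\ln(\epsilon\pi(\epsilon))$, and since $\beta(\epsilon)\uparrow\infty$ as $\epsilon\downarrow 0$ by \eqref{eq:beta-zero}, the square-root factor tends to $1$; this is precisely \eqref{eq:asymptotic-inverse}, so $F[G[\pi]]\simzero\pi$, i.e.\ $\bar F\circ\bar G=\mathrm{id}$ on $\Pizero/\simzero$. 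For $\bar G\circ\bar F$: from \eqref{eq:GF-composition},
\[
\exp\!\left(\frac{G[F[\psi]](\lambda)^2}{2}\right)=\exp\!\left(\frac{\psi(\lambda)^2}{2}\right)\frac{\sqrt{\psi(\lambda)^2-2\ln\psi(\lambda)}}{\psi(\lambda)},
\]
and since $\psi(\lambda)\to\infty$ as $\lambda\to\infty$ (as noted after Assumption~\ref{ass:2}) the last factor equals $\sqrt{1-2\ln\psi(\lambda)/\psi(\lambda)^2}\to 1$, so the ratio of the two exponentials tends to $1$; hence $G[F[\psi]]\siminfty\psi$, i.e.\ $\bar G\circ\bar F=\mathrm{id}$ on $\Psiinfty/\siminfty$. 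Combining the two, $\bar F$ and $\bar G$ are inverse to each other, hence bijections.

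I do not expect a serious obstacle: the argument runs entirely on the explicit composition formulas \eqref{eq:FG-composition} and \eqref{eq:GF-composition} together with the two divergences $\beta(\epsilon)\to\infty$ and $\psi(\lambda)\to\infty$ that hold for every element of $\Pizero$, resp.\ $\Psiinfty$. The only point needing mild care is the truncation $\max(\cdot,1)$ built into $\beta$ in \eqref{eq:def-G}: one should note, as already observed in the paper, that $-2\ln(\epsilon\pi(\epsilon))\ge 1$ for all sufficiently small $\epsilon$, so this truncation is inactive in the asymptotic regime and the clean formulas \eqref{eq:FG-composition}, \eqref{eq:GF-composition} may be used without qualification.
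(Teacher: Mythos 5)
Your argument is correct and is exactly the route the paper intends: it omits the proof as obvious, pointing to Lemmas~\ref{lem:F-preserve} and~\ref{lem:G-preserve} for descent to the quotients and to \eqref{eq:FG-composition}, \eqref{eq:GF-composition} for the two compositions, which is precisely what you carry out (with the limits $\beta(\epsilon)\uparrow\infty$ and $\psi(\lambda)\uparrow\infty$ correctly supplying $F[G[\pi]]\simzero\pi$ and $G[F[\psi]]\siminfty\psi$). Your remark that the truncation in $\beta$ is inactive for small $\epsilon$ is a sensible extra precaution.
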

Proof is obvious and omitted.  
The theorem says that functions in the upper class correspond to the prior densities,
and the integrabilities of $I(\psi)$ correspond to the integrabilities of densities.

\section{Some other properties of the one-sided unbounded forecasting game}
\label{sec:other-properties}
In this paper we have been considering SLLN in the self-normalized form.  However SLLN for OUFG in non-self-normalized form exhibits an interesting property, which we show in the following proposition.
The definition of \emph{compliance} can be found in
\cite{MiyabeTakemura2013Law} and \cite{MiyabeTakemura2015SPA}.

\begin{proposition}
\label{prop:non-sn-slln}
Let ${b_n}$ be a sequence of increasing positive reals such that $\lim_n b_n=\infty$.  In OUFG Skeptic can weakly
force $S_n/b_n \rightarrow 0$ if and only if $\sum_{n} 1/b_n < \infty$.
\end{proposition}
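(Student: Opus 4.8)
The plan is to prove the two directions separately. For the ``if'' direction, suppose $\sum_n 1/b_n < \infty$. Consider the strategy that on round $n$ bets a fixed small proportion $\epsilon_n = c/b_n$ of the current capital, where $c>0$ is a constant chosen small enough (say $c \le 1/2$) so that $\epsilon_n \in [0,1]$ for all $n$; actually one should symmetrize, so I would run two accounts, one betting proportion $c/b_n$ and one betting $-c/b_n$ against the move, or more precisely, since OUFG only allows $\epsilon_n \in [0,1]$, I would bet $M_n = (c/b_n)\cps_{n-1}$ in one account and note that to catch $S_n/b_n$ bounded \emph{away from zero on the negative side} we can use the trivial fact that $x_n \ge -1$ forces $S_n \ge -n$, which combined with $b_n \to \infty$ only gives $\liminf S_n/b_n \ge -\lim n/b_n$; so this needs care (see the obstacle paragraph). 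Focusing on the positive side: with $\epsilon_n = c/b_n$, the capital process is $\cps_n = \cps_0 \prod_{i=1}^n(1 + (c/b_i)x_i)$, and using $\ln(1+t)\ge t - t^2$ for $t \ge -1/2$ (Lemma \ref{lem:log-inequality} with $C=1$, valid once $b_i$ is large enough that $c/b_i \le 1/2$), we get
\[
\ln \cps_n \ge \ln\cps_0 + \sum_{i=1}^n \frac{c}{b_i} x_i - \sum_{i=1}^n \frac{c^2}{b_i^2}x_i^2 + (\text{bounded term}).
\]
The second sum is bounded: $x_i \ge -1$ and if $x_i$ is large then $x_i^2/b_i^2$ could blow up, so instead I would use that Reality must keep $\cps_n$ from tending to infinity, hence along any compliant path $\cps_n$ is bounded, which feeds back to control $\sum (c/b_i)x_i - \sum(c^2/b_i^2)x_i^2$; more cleanly, one observes directly that if $S_n/b_n \not\to 0$ on the positive side, i.e. $S_n > \delta b_n$ infinitely often for some $\delta>0$, then choosing $c$ depending on $\delta$ forces $\cps_n\to\infty$ along a subsequence provided $\sum 1/b_n<\infty$ makes the ``variance'' sum $\sum c^2 x_i^2/b_i^2$ negligible compared to $\sum c x_i /b_i \approx c\delta$ — this requires a self-normalization or truncation argument, which is the technical heart.

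For the ``only if'' direction, suppose $\sum_n 1/b_n = \infty$. I would exhibit a Reality strategy (or rather show a compliant path exists on which $S_n/b_n \not\to 0$) defeating any Skeptic. The natural choice is for Reality to play $x_n \in \{-1, +1\}$-valued (keeping within $[-1,\infty)$) adversarially, or even deterministically: take $x_n \equiv 1$, so $S_n = n$ and $S_n/b_n = n/b_n$. If $b_n = o(n)$ this trivially fails to go to zero; the harder case is $b_n$ comparable to or larger than $n$. Here I would use a Borel–Cantelli / compliance argument in the style of \cite{MiyabeTakemura2013Law}: since $\sum 1/b_n = \infty$, the series test shows that a ``random'' $\pm1$ sequence has $S_n/b_n \not\to 0$, and by the game-theoretic Borel–Cantelli lemma this event has outer probability one, so Reality can comply with $S_n/b_n \not\to 0$ against any Skeptic strategy that keeps its own capital bounded. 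Concretely, one shows that for the fair-coin subgame (Reality restricted to $x_n\in\{-1,1\}$), if $\sum 1/b_n = \infty$ then $\limsup_n |S_n|/b_n > 0$ with probability one by the classical three-series / Kolmogorov argument, translated into the game-theoretic setting via the fact that Skeptic forcing $S_n/b_n\to0$ would yield a martingale-convergence contradiction.

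The main obstacle I anticipate is the asymmetry of OUFG on the negative side in the ``if'' direction: bounding $\limsup S_n/b_n \le 0$ is the easy ``upper-class''-style argument (fixed small positive proportion, Lemma \ref{lem:log-inequality}), but bounding $\liminf S_n/b_n \ge 0$ cannot be done by betting a negative proportion (forbidden in OUFG). One must instead exploit the collateral duty forcing $\cps_n$ bounded together with $x_n \ge -1$: on a path where $S_n < -\delta b_n$ infinitely often, large negative contributions $\sum x_i$ with each $x_i \ge -1$ force many rounds with $x_i$ near $-1$, and a strategy betting a small fixed proportion then multiplies capital by factors $(1 - c/b_i)$ that are summably close to $1$, so the capital does not vanish while... — actually the cleaner route is to reduce to the self-normalized results already proved: Proposition \ref{prop:SLLN-1} gives $\limsup S_n/A_n^2 \le 0$, and one handles $A_n^2$ versus $b_n$ by cases on whether $A_\infty < \infty$ or $= \infty$. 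Reconciling these two regimes — when Reality's moves are small (so $A_n^2$ stays finite or grows slowly) versus large — and extracting exactly the condition $\sum 1/b_n < \infty$ rather than something weaker, is where the argument will need the most care.
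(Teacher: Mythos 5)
Your converse direction contains a genuine error. It rests on the claim that in the fair\mbox{-}coin subgame ($x_n\in\{-1,1\}$) the divergence $\sum_n 1/b_n=\infty$ forces $\limsup_n|S_n|/b_n>0$ almost surely. That is false: take $b_n=n^{3/5}$, so $\sum_n 1/b_n=\infty$, yet by the classical LIL $S_n=O(\sqrt{n\ln\ln n})=o(n^{3/5})$ a.s., hence $S_n/b_n\to0$. The deterministic alternative $x_n\equiv1$ also fails, because Skeptic then bets $M_n=\cps_{n-1}$ and doubles his capital every round, so Reality violates her collateral duty. The content of the proposition lies precisely in the unbounded positive side of the move space: the paper's Reality plays $x_n\in\{-1,2b_n\}$, where a single move $x_n=2b_n$ (in the case $b_n\ge n-1$) instantly yields $S_n/b_n\ge 2-(n-1)/b_n\ge1$, and a potential-function argument with $p_n=1/(1+2b_n)$ and $\cL_n=\cK_n+\prod_{k\le n}c_k$ shows Reality can make such moves infinitely often while keeping $\cK_n\le2$; the divergence $\sum_n p_n=\infty$, equivalent to $\sum_n 1/b_n=\infty$, is exactly what forces infinitely many large moves. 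Your Borel--Cantelli intuition is the right heuristic, but it must be applied to the one-sided distribution putting mass about $1/(2b_n)$ on the huge value $2b_n$, not to the fair coin.

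The forward direction is also not carried through, and the route you chose is harder than necessary. With the proportional bet $\epsilon_n=c/b_n$ you are left to control $\sum_i \epsilon_i^2x_i^2$, which is genuinely problematic for unbounded $x_i$ and which you defer to an unspecified ``truncation or self-normalization'' step; the appeal to Proposition \ref{prop:SLLN-1} is likewise not developed, since $\ss_n^2$ and $b_n$ are not comparable in general. The paper instead uses the \emph{absolute} bet $M_n=1/b_n$: with initial capital $Z=\sum_n 1/b_n$, the process $Y_n=Z+\sum_{i\le n}x_i/b_i$ is a nonnegative capital process (nonnegativity uses $x_i\ge-1$), the game-theoretic martingale convergence theorem lets Skeptic weakly force convergence of $\sum_i x_i/b_i$, and Kronecker's lemma then gives $S_n/b_n\to0$ in both directions at once. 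Incidentally, the negative side that you single out as the main obstacle is trivial here: monotonicity of $b_n$ gives $n/b_n\le 2\sum_{i\ge\lceil n/2\rceil}1/b_i\to0$ when $\sum_n 1/b_n<\infty$, so $S_n/b_n\ge -n/b_n\to0$ deterministically from $x_i\ge-1$ alone.
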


\begin{proof}
Suppose that $Z=\sum_{n} 1/b_n < \infty$. Consider Skeptic's strategy $M_n = 1/b_n$.
Then $Y_n = Z + \sum_{i=1}^n M_i x_i$ is a nonnegative martingale and by the game-theoretic martingale convergence theorem (Lemma 4.5 of \cite{ShaferVovk2001Probability}) Skeptic can weakly force that
$Y_n$ converges to a finite value.  Then by Kronecker's lemma Skeptic can weakly force
$\lim_n \dfrac{S_n}{b_n} = 0$.

The converse part can be proved by a deterministic strategy of Reality complying 
with $\limsup_n |S_n|/b_n \ge 1$.
Suppose $\sum_{n} 1/b_n = \infty$.
If $b_n<n-1$ for infinitely many $n$, then Reality chooses $x_n=-1$ for all $n$,
which complies with the desired property.

Now, we assume that $b_n\ge n-1$ for all but finitely many $n$.
In each round Reality chooses either $x_n = -1$ or $x_n = 2b_n$.
Note that $M_n\ge0$, otherwise,  Skeptic would be bankrupt
when Reality chooses $x_n$ large enough.
Let $p_n = 1/(1+2b_n)$,
\[c_n=\begin{cases}\frac{1}{2(1-p_n)}&\mbox{ if }x_n=-1\\
\frac{1}{2p_n}&\mbox{ if }x_n=2b_n\end{cases}\]
and
\[\cL_n=\cK_n+\prod_{k=1}^n c_k, \qquad \cK_0=1, \ \cL_0=2.\]

We show that, for each $n$, at least one of $x_n\in\{-1,2b_n\}$ satisfies
$\cL_n\le\cL_{n-1}$.
Suppose otherwise.
Then
\[\cK_{n-1}-M_n+\Big(\prod_{k=1}^{n-1}c_k\Big)\cdot\frac{1}{2(1-p_n)}>\cK_{n-1}+
\Big(\prod_{k=1}^{n-1}c_k\Big),\]
\[\cK_{n-1}+2b_nM_n+\Big(\prod_{k=1}^{n-1}c_k\Big)\cdot\frac{1}{2p_n}>\cK_{n-1}+
\Big(\prod_{k=1}^{n-1}c_k\Big).\]
Thus,
\[-(1-p_n)M_n+p_n2b_nM_n>0,\]
which implies
\[p_n>\frac{1}{1+2b_n}.\]
This is a contradiction.

Reality chooses her  move so that $\cL_n\le\cL_{n-1}$ for all $n$.
Then $\sup_n\cK_n\le2$.
Furthermore, $\sup_n\prod_{k=1}^n c_k\le2$.
If $x_n=2b_n$ for at most finitely many $n$,
then there exists $m$ such that $x_n=-1$ for all $n\ge m$,
whence $\prod_{k=1}^n c_k=\prod_{k=1}^{m-1}c_k\cdot\prod_{k=m}^n\frac{1}{2(1-p_n)}\to\infty$ because $\sum_n p_n=\infty$.
Hence, $x_n=2b_n$ for infinitely many $n$.
For such an $n$ that is large enough, we have
\begin{align*}
  S_n=&S_{n-1}+2b_n\ge-(n-1)+2b_n,\\
  \frac{S_n}{b_n}\ge&2-\frac{n-1}{b_n}\ge1,
\end{align*}
which implies $\limsup_n S_n/b_n\ge1$.
\end{proof}

The intuition of $p_n$ is the probability of $x_n=2b_n$
so that the expectation is
$(-1)\cdot(1-p_n)+2b_n p_n=0$.
Then, $\prod_{k=1}^n c_k$ can be seen as a capital process in a game,
and so is $\cL$.

In this proof we constructed a particular deterministic strategy of Reality.
General theory of constructing Reality's deterministic strategy complying with certain events is developed in
\cite{MiyabeTakemura2013Law} and \cite{MiyabeTakemura2015SPA}.

\section{Discussions}
\label{sec:discussions}
In this paper we gave a unified treatment of the rate of convergence of SLLN 
in terms of  Bayesian strategies, including the validity of LIL. 
Concerning LIL we did not discuss the sharpness of the bound.  In OUFG the sharpness
does not hold, because Reality can simply take $x_n \equiv -1$.
For the sharpness we need some boundedness condition, such as the
simplified predictably unbounded forecasting game (Section 5.1 of \cite{ShaferVovk2001Probability}, \cite{sasai2015efkp-sn}).  Even with some boundedness conditions,  current
game-theoretic proofs of the sharpness are still complicated and the nature of the weights, involving also negative ones due to short selling of a strategy, does not seem to be clear.  We hope that the unified treatment of this paper also helps to streamline proofs of the sharpness.

As we discussed in \eqref{eq:portfolio}, a Bayesian strategy  for 
OUFG can be understood as the portfolio of cash and one risky asset.
In the literature on universal portfolio by Thomas Cover and other researchers
(Chapter 16 of \cite{Cover-Thomas-book-2nd}, \cite{Cover-1991MF}, \cite{Ordentlich-Cover-MOR1998}, \cite{Vovk1998Proceedings}),
Bayesian strategies for many risky assets are considered.  
They recommend power priors such as the Dirichlet prior, which corresponds to 
priors in Section \ref{subsec:power-prior}.  From the viewpoint of
the rate of convergence of SLLN, we have shown that we should take $a=1$ in
\eqref{eq:power-prior} with additional multiplicative logarithmic terms.  
Hence our recommendations and those in universal portfolio literature seem to be
different.  This may be due to the difference of criteria for evaluating strategies.
It is interesting to clarify these differences.

\section*{Acknowledgement}\label{sec:acknowlegement}

This research is supported by JSPS Grant-in-Aid for Scientific Research No.\ 16K12399.

\bibliographystyle{abbrv}
\bibliography{bayesian-rate}

\end{document}